\newtheorem{te}{Theorem}[section]
\newtheorem{prop}[te]{Proposition}
\newtheorem{lemme}[te]{Lemma}
\theoremstyle{definition}
\newtheorem{de}[te]{Definition}
\newtheorem{ex}[te]{Example}
\theoremstyle{remark}
\newtheorem{rque}[te]{Remark}
\newlength{\plarg}
\title{Co-Hopfian virtually free groups and elementary equivalence}
\author{Simon Andr\'e}
\date{\today}
\begin{document}

\begin{minipage}{\linewidth}

\maketitle

\vspace{5mm}

\begin{abstract}
We prove that two co-Hopfian finitely generated virtually free groups are elementarily equivalent if and only if they are isomorphic. We also prove that co-Hopfian finitely generated virtually free groups are homogeneous in the sense of model theory. 
\end{abstract}
	
\end{minipage}

\vspace{0mm}

\section{Introduction}

\thispagestyle{empty}

A group is said to be \emph{virtually free} if it has a free subgroup of finite index. In what follows, all virtually free groups are assumed to be finitely generated. A group $G$ is \emph{co-Hopfian} if every injective endomorphism of $G$ is an automorphism. This paper is concerned with the classification of co-Hopfian virtually free groups up to elementary equivalence. Notable examples of co-Hopfian virtually free groups are $\mathrm{GL}_2(\mathbb{Z})$ (which is isomorphic to the amalgamated product $D_4\ast_{D_2} D_6$ where $D_n$ denotes the dihedral group of order $2n$), and $S_{n+1}\ast_{S_n}S_{n+1}$ where $S_n$ denotes the symmetric group on $n\geq 2$ elements (see \cite{Moi13} for a characterisation of co-Hopfian groups among virtually free groups). Recall that non-abelian free groups are elementarily equivalent by the famous work of Sela \cite{Sel06}, and Kharlampovich-Myasnikov \cite{KM06}, but free groups are far from being co-Hopfian, and it is natural to expect that co-Hopfian virtually groups behave very differently from free groups from a model-theoretic point of view; it is indeed the case, as shown by the following theorem (see paragraph \ref{elem_def} for a definition of $\forall\exists$-equivalence and elementary equivalence).

\begin{te}\label{isom}Let $G$ and $G'$ be two co-Hopfian virtually free groups. The following three assertions are equivalent.
\begin{enumerate}
    \item $G$ and $G'$ are $\forall\exists$-equivalent.
    \item $G$ and $G'$ are elementarily equivalent.
    \item $G$ and $G'$ are isomorphic.
\end{enumerate}
\end{te}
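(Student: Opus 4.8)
The implications $(3)\Rightarrow(2)\Rightarrow(1)$ are the routine direction: isomorphic structures satisfy the same first-order sentences, and every $\forall\exists$-sentence is in particular a first-order sentence, so elementary equivalence restricts to $\forall\exists$-equivalence. The entire content is therefore the implication $(1)\Rightarrow(3)$, and the plan is to upgrade $\forall\exists$-equivalence into an honest isomorphism $G\to G'$, using the co-Hopfian hypothesis as the rigidity input.

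The skeleton of the construction is a two-sided embedding argument. Since a finitely generated virtually free group is finitely presented, fix presentations $G=\langle g_1,\dots,g_n\mid r_1,\dots,r_k\rangle$ and similarly for $G'$. The existential sentence $\exists x_1\cdots x_n\,\bigwedge_i r_i(x_1,\dots,x_n)=1$ holds in $G$ (witnessed by the generators), and since $\forall\exists$-equivalence subsumes $\exists$-equivalence, it holds in $G'$; this produces a homomorphism $\phi\colon G\to G'$, and symmetrically a homomorphism $\psi\colon G'\to G$. The point of co-Hopfianness is that if both $\phi$ and $\psi$ can be arranged to be injective, then $\psi\circ\phi$ is an injective endomorphism of $G$, hence an automorphism; this forces $\psi$ to be surjective and, being injective, an isomorphism, whence $\phi$ is an isomorphism as well. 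Thus the theorem reduces to a single \emph{Key Lemma}: the relevant sentences can be chosen so that the resulting homomorphism $G\to G'$ is injective, i.e. so that the witnessing tuple $\bar h$ in $G'$ generates a faithful copy of $G$ rather than a proper quotient.

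The main obstacle is exactly this injectivity, which is where the $\forall\exists$-strength (not merely $\exists$-strength) and the virtually free structure must be exploited. A purely existential sentence can only impose finitely many equalities and inequalities among $\bar h$, and cannot by itself forbid the infinitely many extra relations that a proper quotient would satisfy. My plan is to encode, instead, the canonical graph-of-finite-groups decomposition of $G$: because $G$ is virtually free it is the fundamental group of a finite graph of finite groups, and the finitely many vertex and edge groups, being finite of bounded order, are each pinned down up to isomorphism by existential formulas (their multiplication tables), while maximality of the finite subgroups and the bound on torsion are expressible by $\forall$- and $\forall\exists$-statements in a hyperbolic group. One then adds $\forall\exists$-clauses asserting that every element is absorbed into the expected finite extension of the expected free skeleton and that the finite subgroups meet along the prescribed edge groups. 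The crux is to prove that this finite amount of first-order data determines $G$ up to isomorphism, and here the co-Hopfian hypothesis enters essentially: it guarantees that the decomposition is rigid, with none of the free factors or pinchable edges that, in the free case, allow $\forall\exists$-equivalent but non-isomorphic groups. Assembling these clauses into a genuine $\forall\exists$-sentence true in $G$, hence in $G'$, and checking that any model among co-Hopfian virtually free groups must be isomorphic to $G$, is the step I expect to carry the real weight of the argument.
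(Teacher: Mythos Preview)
Your global architecture matches the paper's: reduce $(1)\Rightarrow(3)$ to producing an injective homomorphism $G\hookrightarrow G'$ (and symmetrically $G'\hookrightarrow G$), then compose and invoke co-Hopfianness. The gap is entirely in your Key Lemma. Your proposed mechanism---writing $\forall\exists$-clauses that ``encode the canonical graph-of-finite-groups decomposition'' and assert that ``every element is absorbed into the expected finite extension of the expected free skeleton''---is not a proof but a wish: subgroup membership in $\langle\bar h\rangle$ is not first-order, Stallings decompositions are not unique, and the paper's own example ($\mathrm{GL}_2(\mathbb{Z})$ versus its HNN extension over $D_2$) shows that the $\forall\exists$-theory alone does not pin down the graph of groups even when one side is co-Hopfian. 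So there is no reason to believe such clauses can be written, and you give no argument that they can.

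The paper's route to the same Key Lemma is quite different and does not attempt to axiomatize $G$. Two ingredients replace your encoding idea. First, a folding/shortening argument on Stallings trees (Proposition~\ref{sa2}) produces a \emph{finite} set $F\subset G\setminus\{1\}$ such that every non-injective $\phi\colon G\to G'$ can be precomposed with some $\sigma\in\mathrm{Aut}_0(G)$ so that $\phi\circ\sigma$ kills an element of $F$; crucially $\sigma\in\mathrm{Aut}_0(G)$ means $\phi\circ\sigma\sim\phi$ in the sense of Definition~\ref{equiv}, and this equivalence is expressible by an existential formula (Lemma~\ref{related}). Second, co-Hopfianness is upgraded to \emph{rigidity} (Proposition~\ref{theorigid}): every endomorphism of $G$ equivalent to $\mathrm{id}_G$ is an automorphism, proved via Moioli's characterization and the tree of cylinders. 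The $\forall\exists$-sentence is then a contradiction device, not a structural description: assuming no embedding exists, one writes $\mu$ saying ``for every morphism $\phi$ there is a morphism $\phi'\sim\phi$ killing some element of $F$'', observes $G'\models\mu$, transfers to $G$, instantiates $\phi=\mathrm{id}_G$, and obtains an automorphism $\phi'$ with nontrivial kernel---absurd. This is the idea you are missing; your sketch neither finds the test set $F$ nor isolates the rigidity statement that makes the transfer back to $G$ bite.
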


It is worth pointing out that this result is not an immediate consequence of the classification of virtually free groups up to $\forall\exists$-equivalence established in \cite{And19a}. In particular, it is not true that two $\forall\exists$-equivalent virtually free groups embed into each other. For instance, $G=\mathrm{GL}_2(\mathbb{Z})\simeq D_4\ast_{D_2} D_6$ and $G'=\langle G,t \ \vert \ [t,D_2]=1\rangle$ are $\forall\exists$-equivalent but $G'$ does not embed into $G$ since $G$ is co-Hopfian and $G,G'$ are not isomorphic.

\smallskip

We also consider homogeneity. Recall that a group $G$ is \emph{homogeneous} if two tuples of elements that are indistinguishable by means of first-order formulas are in the same orbit under the action of the group of automorphisms of $G$ (see paragraph \ref{homo} for a formal definition). Perin and Sklinos \cite{PS12}, and independently Ould Houcine \cite{OH11}, proved that free groups are homogeneous (and even $\forall\exists$-homogeneous, see \ref{homo}). In \cite{And18b}, we proved that virtually free groups satisfy a weaker property, which we called almost-homogeneity. We also proved that virtually free groups are not $\forall\exists$-homogeneous in general, and conjectured that they are not homogeneous in general. However, our next result shows that co-Hopfian virtually free groups are $\forall\exists$-homogeneous.

\begin{te}\label{homogeneous}Co-Hopfian virtually free groups are $\forall\exists$-homogeneous.\end{te}

Last, we consider the class of virtually free groups $G$ that are co-Hopfian and such that $\mathrm{Out}(G)$ is finite. As an example, $\mathrm{GL}_2(\mathbb{Z})$ satisfies these two conditions. We prove the following results (see Section \ref{model} for a definition of $\exists$-equivalence, $\exists$-homogeneity and prime groups).

\begin{te}\label{isom2}Let $G$ and $G'$ be two co-Hopfian virtually free groups with finite outer automorphism groups. The following three assertions are equivalent.
\begin{enumerate}
    \item $G$ and $G'$ are $\exists$-equivalent.
    \item $G$ and $G'$ are elementarily equivalent.
    \item $G$ and $G'$ are isomorphic.
\end{enumerate}
\end{te}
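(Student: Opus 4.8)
The plan is to prove Theorem~\ref{isom2} by bootstrapping from the two theorems already established, since the implications $(3)\Rightarrow(2)\Rightarrow(1)$ are immediate (isomorphic groups are elementarily equivalent, and elementary equivalence certainly implies $\exists$-equivalence). The entire content lies in proving $(1)\Rightarrow(3)$, and the role of the hypothesis $\mathrm{Out}(G)<\infty$ is precisely to upgrade the \emph{existential} fragment to the full \emph{universal--existential} fragment, so that Theorem~\ref{isom} can be invoked to conclude. Thus the first thing I would do is reduce to the following assertion: if $G$ and $G'$ are co-Hopfian virtually free groups with finite outer automorphism groups that are $\exists$-equivalent, then they are already $\forall\exists$-equivalent (whence isomorphic by Theorem~\ref{isom}).

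\smallskip

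To carry this out, I would first record the standard fact that $\exists$-equivalence is the same as having the same existentially closed embeddings in both directions, i.e. that $G$ and $G'$ embed into each other in a way compatible with existential sentences; more usefully, $\exists$-equivalence means each group satisfies exactly the same existential sentences, equivalently (by taking negations) the same universal sentences. The key structural input is that for a finitely generated group, the existential theory encodes the isomorphism type of the group as an abstract marked object up to the ambiguity measured by $\mathrm{Out}(G)$: a single existential sentence can assert the existence of a generating tuple satisfying the finite set of relations of a fixed finite presentation (co-Hopfian virtually free groups are finitely presented), while universal sentences pin down that no \emph{extra} relations hold. The finiteness of $\mathrm{Out}(G)$ is what lets a first-order formula quantify over the finitely many ways an automorphism can act, so that one can express, existentially, the existence of an embedded copy of $G'$ together with enough data to reconstruct a $\forall\exists$-statement.

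\smallskip

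Concretely, the main step I would pursue is to show that under the finite-$\mathrm{Out}$ hypothesis every $\forall\exists$-sentence true in $G$ is already \emph{equivalent, within the class of co-Hopfian virtually free groups with finite outer automorphism group, to an existential sentence}. The mechanism is that the JSJ/Stallings decomposition of a virtually free group together with its (finite) automorphism data is captured by a finite amount of information, and co-Hopfianity forbids proper self-embeddings, so an $\exists$-elementary map $G\hookrightarrow G'$ must be onto up to the finitely many outer automorphisms. I would make this precise by using co-Hopfianity to argue that any existential sentence witnessing a copy of $G$ inside $G'$ forces, via the absence of proper finite-index-type self-embeddings, that the witnessed subgroup is all of $G'$, and then the finiteness of $\mathrm{Out}$ removes the remaining indeterminacy so that the embedding can be promoted to an isomorphism.

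\smallskip

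The hard part, and the step I expect to be the main obstacle, is precisely the promotion of an $\exists$-equivalence to an isomorphism without having the $\forall$-direction for free: $\exists$-equivalence only gives mutual ``existential domination'', and a priori this yields embeddings $G\hookrightarrow G'$ and $G'\hookrightarrow G$ rather than an isomorphism. Co-Hopfianity gives that $G\hookrightarrow G'\hookrightarrow G$ composes to an automorphism of $G$, forcing the first embedding to be surjective and hence an isomorphism \emph{provided} the two embeddings are genuine group embeddings definable from the existential data; the delicate point is extracting honest embeddings (not merely a preservation of existential sentences) from $\exists$-equivalence alone. Here the finiteness of $\mathrm{Out}(G)$ must be used essentially — it is what allows the existential theory to see the full marked isomorphism type rather than only the isomorphism type up to an infinite automorphism ambiguity — and I would expect the proof to route this through the fact that a co-Hopfian virtually free group with finite $\mathrm{Out}$ is \emph{prime} and $\exists$-homogeneous, so that its existential type determines it uniquely among finitely generated groups, closing the argument.
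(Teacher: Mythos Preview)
Your high-level plan --- produce injections $G\hookrightarrow G'$ and $G'\hookrightarrow G$, compose, and invoke co-Hopfianity --- is exactly right, and it is the structure the paper's tools are designed for. The genuine gap is the step you yourself flag as ``delicate'': extracting honest injective homomorphisms from mere $\exists$-equivalence. Neither of the routes you suggest closes this gap. Routing through primeness does not help, because primeness only produces elementary embeddings into groups that are \emph{elementarily} equivalent to $G$, whereas here you only have $\exists$-equivalence; and $\exists$-homogeneity is an internal statement about tuples in a single group, not a device for manufacturing maps between two groups. Likewise, your proposal to upgrade $\exists$-equivalence to $\forall\exists$-equivalence is never made precise, and there is no general mechanism of that kind.

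The concrete mechanism in the paper is the combination of Proposition~\ref{sa} with the hypothesis $\vert\mathrm{Out}(G)\vert<\infty$, exactly as in the proof that $G$ is strongly co-Hopfian, but applied to the pair $(G,G')$ rather than to $(G,G)$. Proposition~\ref{sa} gives a finite set $F\subset G\setminus\{1\}$ such that every non-injective $\phi:G\to G'$ satisfies $\ker(\phi\circ\sigma)\cap F\neq\varnothing$ for some $\sigma\in\mathrm{Aut}(G)$. Writing $\mathrm{Aut}(G)=\bigcup_i \sigma_i\circ\mathrm{Inn}(G)$ and setting $F'=\bigcup_i\sigma_i(F)$, normality of $\ker\phi$ gives $\ker(\phi)\cap F'\neq\varnothing$ for every non-injective $\phi:G\to G'$. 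Now the single \emph{existential} sentence
\[
\exists x_1\cdots\exists x_n\ \Bigl(\Sigma(x_1,\dots,x_n)=1\ \wedge\ \bigwedge_{f\in F'} w_f(x_1,\dots,x_n)\neq 1\Bigr)
\]
holds in $G$ (witnessed by the generators), hence in $G'$, and the resulting morphism $G\to G'$ is injective by construction of $F'$. The symmetric argument gives $G'\hookrightarrow G$, and co-Hopfianity finishes. This is the missing idea; once you have it, the detour through $\forall\exists$-equivalence and Theorem~\ref{isom} is unnecessary.
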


\begin{te}\label{homogeneous2}Let $G$ be a co-Hopfian virtually free groups  with $\mathrm{Out}(G)$ finite. Then $G$ is $\exists$-homogeneous and prime.\end{te}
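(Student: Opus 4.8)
The plan is to deduce both assertions from a single structural fact: when $\mathrm{Out}(G)$ is finite and $G$ is co-Hopfian, the action of $\mathrm{Aut}(G)$ on tuples is rigid enough that the existential type of a tuple already pins down its orbit. Throughout I fix a finite generating tuple $\bar g=(g_1,\dots,g_k)$ and use that $G$, being virtually free, is finitely presented, so that there is a quantifier-free formula $R(\bar x)$ expressing that $\bar x$ satisfies the finitely many defining relators, and that $G$ is hyperbolic, hence equationally Noetherian. I also use that $G$ is Hopfian (being finitely generated and residually finite), so that Hopfianity together with co-Hopfianity makes injectivity, surjectivity and being an automorphism coincide for endomorphisms of $G$.

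The key lemma I would establish is the following: there is a \emph{parameter-free} existential formula $\beta(\bar y)$ such that, for $\bar y \in G^k$, one has $G\models\beta(\bar y)$ if and only if the assignment $g_i\mapsto y_i$ extends to an automorphism of $G$, i.e. $\bar y\in\mathrm{Aut}(G)\cdot\bar g$. The inner part of the orbit is immediate: conjugation is existential, so $\{t\bar c\,t^{-1}:t\in G\}$ is existentially defined once $\bar c$ is named; and since $\mathrm{Out}(G)$ is finite, choosing outer representatives $\sigma_1=\mathrm{id},\sigma_2,\dots,\sigma_m$ gives $\mathrm{Aut}(G)\cdot\bar g=\bigcup_{i\le m}\{t\,\sigma_i(\bar g)\,t^{-1}:t\in G\}$, a finite union of conjugacy orbits of fixed tuples. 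The real work is to remove the parameters $\sigma_i(\bar g)$ and to express intrinsically that $\bar y$ induces an automorphism rather than a mere endomorphism: here I would combine $R(\bar y)$ with an existential certificate that the resulting homomorphism is injective, which co-Hopfianity then upgrades to an automorphism, and equational Noetherianity is what lets the required non-degeneracy be reduced to finitely many equations and hence captured by a single existential formula.

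Granting the key lemma, $\exists$-homogeneity follows quickly. Given tuples $\bar a=\bar w(\bar g)$ and $\bar b$ with the same existential type, consider the parameter-free existential formula $\psi(\bar x):=\exists\bar y\,\big(\beta(\bar y)\wedge \bar x=\bar w(\bar y)\big)$, built from the words $\bar w$ expressing $\bar a$ over $\bar g$. Then $G\models\psi(\bar a)$ (take $\bar y=\bar g$), so $G\models\psi(\bar b)$ because $\bar a$ and $\bar b$ satisfy the same existential formulas; this produces a generating tuple $\bar h$ with $\beta(\bar h)$ and $\bar b=\bar w(\bar h)$, and the automorphism $\phi:g_i\mapsto h_i$ sends $\bar a=\bar w(\bar g)$ to $\bar w(\bar h)=\bar b$. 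This is precisely the gain over Theorem \ref{homogeneous}: finiteness of $\mathrm{Out}(G)$ lets the single existential formula $\psi$ do the work that would otherwise require the full $\forall\exists$-type.

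For primeness I would show that $\beta(\bar x)$ isolates the type of the generating tuple $\bar g$, which is the standard criterion for a finitely generated group to be the prime model of its theory. Since $\bar g$ generates $G$, every tuple is term-defined from $\bar g$, so it suffices that $\mathrm{tp}(\bar g)$ be principal; the formula $\beta$ belongs to $\mathrm{tp}(\bar g)$, and within $G$ any realisation of $\beta$ lies in $\mathrm{Aut}(G)\cdot\bar g$ and hence shares the complete type of $\bar g$. Transferring to an arbitrary $M\models\mathrm{Th}(G)$, the existential sentence $\exists\bar y\,\beta(\bar y)$ holds in $M$, and a witness yields an embedding $G\hookrightarrow M$ which the isolation makes elementary, so $G$ is prime; alternatively one can invoke Theorem \ref{isom2} to identify $G$ as the smallest model of its $\exists$-theory. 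The main obstacle throughout is the key lemma, and specifically the parameter-free existential description of ``$\bar y$ induces an automorphism of $G$'': expressing generation of the whole group and injectivity by a single existential formula is exactly where co-Hopfianity, Hopfianity, finiteness of $\mathrm{Out}(G)$ and equational Noetherianity must all be combined.
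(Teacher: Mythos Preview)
Your framework is exactly the one the paper uses, and steps 2 and 3 of your plan are fine: once the key lemma holds, $\exists$-homogeneity and primeness follow by the standard principal-type argument (this is Ould Houcine's observation, recorded in the paper as Lemma \ref{prime}). The key lemma you state is precisely the assertion that $G$ is \emph{strongly co-Hopfian}: a finite set $F'\subset G\setminus\{1\}$ with the property that any endomorphism avoiding $F'$ in its kernel is an automorphism yields the parameter-free quantifier-free formula $\beta(\bar y)\equiv R(\bar y)\wedge\bigwedge_{f\in F'} w_f(\bar y)\neq 1$.

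The gap is that you do not prove the key lemma, and the tool you name for it is not the right one. Equational Noetherianity controls \emph{conjunctions} of equations (descending chains of algebraic sets): it lets you replace an infinite system $\bigwedge_i w_i(\bar y)=1$ by a finite subsystem. What you need here is the opposite: to replace the infinite \emph{disjunction} $\bigvee_{w(\bar g)\neq 1} w(\bar y)=1$ (which expresses non-injectivity) by a finite one. There is no general mechanism for this; a co-Hopfian, equationally Noetherian, finitely presented group with finite $\mathrm{Out}$ need not obviously be strongly co-Hopfian. The paper obtains the finite test set $F'$ by a concrete folding argument on Stallings trees (Proposition \ref{sa}): for any non-injective $\phi$ there is $\sigma\in\mathrm{Aut}(G)$ with $\ker(\phi\circ\sigma)\cap F\neq\varnothing$. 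Finiteness of $\mathrm{Out}(G)$ is then used exactly once, to absorb the automorphism twist: writing $\mathrm{Aut}(G)=\bigcup_i \sigma_i\,\mathrm{Inn}(G)$ and noting that inner automorphisms do not change kernels, one sets $F'=\bigcup_i\sigma_i(F)$ and concludes that any $\phi$ with $\ker(\phi)\cap F'=\varnothing$ is injective, hence an automorphism by co-Hopfianity. So your outline is correct, but the substantive content --- the existence of the finite set $F$ --- is missing, and you should replace the appeal to equational Noetherianity by the folding/shortening argument of Proposition \ref{sa}.
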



\subsection*{Acknowledgements}I thank Vincent Guirardel for useful conversations. This work was funded by the Deutsche Forschungsgemeinschaft (DFG, German Research Foundation) under Germany’s Excellence Strategy EXC 2044–390685587, Mathematics Münster: Dynamics–Geometry–Structure and by CRC 1442 Geometry: Deformations and Rigidity.

\section{Preliminaries}
\subsection{Model theory}\label{model}

For detailed background, the reader may for
instance consult \cite{Mar02}. 

\subsubsection{First-order formulas}The language of groups uses the following symbols: the quantifiers $\forall$ and $\exists$, the logical connectors $\wedge$, $\vee$, $\Rightarrow$, the equality and inequality relations $=$ and $\neq$, the symbols $1$ (standing for the identity element), ${}^{-1}$ (standing for the inverse), $\cdot$ (standing for the group multiplication), parentheses $($ and $)$, and variables $x,y,g,z\ldots$, which are to be interpreted as elements of a group. The \emph{terms} are words in the variables, their inverses, and the identity element (for instance, $x\cdot y\cdot x^{-1}\cdot y^{-1}$ is a term). For convenience, we omit group multiplication. A \emph{first-order formula} is made from terms iteratively: one can first make \emph{atomic formulas} by comparing two terms by means of the symbols $=$ and $\neq$ (for instance, $xyx^{-1}y^{-1}=1$ is an atomic formula), then one can use logical connectors and quantifiers to make new formulas from old formulas, for instance $\exists x ((x\neq 1)\wedge (\forall y (xyx^{-1}y^{-1}=1)))$. We sometimes drop parentheses when there is no ambiguity. A variable is \emph{free} if it is not bound by any quantifier $\forall$ or $\exists$. A \emph{sentence} is a formula without free variables. Given a formula $\varphi(x_1,\ldots,x_n)$, a group $G$ and a tuple $(g_1,\ldots,g_n)\in G^n$, one says that $G$ \emph{satisfies} $\varphi(g_1,\ldots,g_n)$ if this statement is true in the usual sense when the variables are interpreted as elements of $G$. An \emph{existential formula} is a formula of the form $\varphi(\bm{x}):\exists\bm{y} \ \theta(\bm{x},\bm{y})$ where $\theta(\bm{x},\bm{y})$ is a finite disjunction of conjunctions of equations and inequations in the variables of the tuples $\bm{x},\bm{y}$, i.e.\ a string of symbols of the form $\bigvee_{i=1}^{p}\bigwedge_{j=1}^{q_i}w_{i,j}(\bm{x},\bm{y})\varepsilon_i 1$, where each $\varepsilon_i$ denotes $=$ or $\neq$, $p$ and $q_i$ are integers, and $w_{i,j}$ is a reduced word in the variables of $\bm{x}$ and $\bm{y}$ and their inverses. Similarly, a \emph{$\forall\exists$-formula} is a formula of the form $\varphi(\bm{x}):\forall\bm{y}\exists\bm{z} \ \theta(\bm{x},\bm{y},\bm{z})$ where $\theta(\bm{x},\bm{y},\bm{z})$ is a finite disjunction of conjunctions of equations and inequations in the variables of the tuples $\bm{x},\bm{y},\bm{z}$. 

\subsubsection{Elementary equivalence}\label{elem_def}Two groups $G$ and $G'$ are said to be \emph{elementarily equivalent}, denoted $G\equiv G'$, if they satisfy the same first-order sentences. We say that $G$ and $G'$ are \emph{existentially equivalent}, denoted $G\equiv_{\exists} G'$, if they satisfy the same existential sentences. We define similarly the notion of \emph{$\forall\exists$-equivalence}, denoted  $\equiv_{\forall\exists}$.

\subsubsection{Homogeneity}\label{homo}Let $G$ be a group. We say that two $n$-tuples $\bm{u}$ and $\bm{v}$ of elements of $G$ have the same \emph{type} if, for every first-order formula $\phi(\bm{x})$ with $n$ free variables, $G$ satisfies $\phi(\bm{u})$ if and only if $G$ satisfies $\phi(\bm{v})$. Similarly, we say that $\bm{u}$ and $\bm{v}$ have the same existential type (respectively $\forall\exists$-type) if, for every $\exists$-formula (respectively $\forall\exists$-formula) $\phi(\bm{x})$ with $n$ free variables, $G$ satisfies $\phi(\bm{u})$ if and only if $G$ satisfies $\phi(\bm{v})$. The group $G$ is said to be \emph{homogeneous} (respectively \emph{$\exists$-homogeneous} and \emph{$\forall\exists$-homogeneous}) if for any two $n$-tuples $\bm{u}$ and $\bm{v}$ having the same type (respectively $\exists$-type and $\forall\exists$-type), there exists an automorphism $\sigma$ of $G$ mapping $\bm{u}$ to $\bm{v}$.

\subsubsection{Prime models}

A map $\varphi : G \rightarrow G'$ between two groups $G$ and $G'$ is said to be \emph{elementary} if the following condition holds: for every first-order formula $\theta(\bm{x})$ with $n$ free variables in the language of groups, and for every $n$-tuple $\bm{u}\in G^n$, $G$ satisfies $\theta(\bm{u})$ if and only if $G'$ satisfies $\theta(\bm{u})$. In particular, $\varphi$ is a morphism and is injective. The group $G$ is \emph{prime} if for every group $G'$ that is elementarily equivalent to $G$, there exists an elementary embedding $\varphi : G \rightarrow G'$. 

\subsection{Tree of cylinders}\label{tree}

Let $k\geq 1$ be an integer, let $G$ be a finitely generated group, and let $\Delta$ be a splitting of $G$ over finite groups of order $k$. Let $T$ denote the Bass-Serre tree of $\Delta$. In \cite{GL11}, Guirardel and Levitt construct a tree that only depends on the deformation space of $T$. This tree is called the tree of cylinders of $T$, denoted by $T_c$. Recall that the \emph{deformation space} of a simplicial $G$-tree $T$ is the set of $G$-trees that can be obtained from $T$ by some collapse and expansion moves, or equivalently, which have the same elliptic subgroups as $T$. We summarize below the construction of the tree of cylinders $T_c$.

First, we define an equivalence relation $\sim$ on the set of edges of $T$: we declare two edges $e$ and $e'$ to be equivalent if $G_e=G_{e'}$. Since all edge stabilizers have the same order, the union of all edges in the equivalence class of an edge $e$ is a subtree $Y_e$, called a cylinder of $T$. In other words, $Y_e$ is the subset of $T$ pointwise fixed by the edge group $G_e$. Two distinct cylinders meet in at most one point. The \emph{tree of cylinders} $T_c$ of $T$ is the bipartite tree with set of vertices $V_0(T_c)\sqcup V_1(T_c)$ such that $V_0(T_c)$ is the set of vertices $x$ of $T$ which belong to at least two cylinders, $V_1(T_c)$ is the set of cylinders $Y_e$ of $T$, and there is an edge $\varepsilon=(x, Y_e)$ between $x$ and $Y_e$ in $T_c$ if and only if $x\in Y_e$. If $Y_e$ belongs to $V_1(T_c)$, the vertex group $G_{Y_e}$ is the global stabilizer of $Y_e$ in $T$, i.e.\ the normalizer of $G_e$ in $G$ (see below). 

\begin{lemme}\label{normalisateur}The global stabilizer of $Y_e$ in $G$ coincides with $N_G(G_e)$.\end{lemme}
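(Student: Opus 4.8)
The plan is to prove the two inclusions separately, relying only on the definition of the cylinder $Y_e$ as the union of the edges $e'$ of $T$ with $G_{e'}=G_e$, together with the elementary fact that $G$ acts on $T$ by tree automorphisms, so that $G_{g\cdot e'}=gG_{e'}g^{-1}$ for every edge $e'$ and every $g\in G$.

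For the inclusion $N_G(G_e)\subseteq \mathrm{Stab}(Y_e)$, I would take $g\in N_G(G_e)$ and an arbitrary edge $e'$ of $Y_e$. By definition of the cylinder, $G_{e'}=G_e$, hence $G_{g\cdot e'}=gG_{e'}g^{-1}=gG_eg^{-1}=G_e$, so that $g\cdot e'$ again lies in $Y_e$. This shows $g\cdot Y_e\subseteq Y_e$; applying the same argument to $g^{-1}$, which also normalizes $G_e$, gives the reverse inclusion, whence $g\cdot Y_e=Y_e$ and $g$ lies in the global stabilizer of $Y_e$.

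Conversely, to prove $\mathrm{Stab}(Y_e)\subseteq N_G(G_e)$, I would take $g$ in the global stabilizer of $Y_e$. Since $e$ is an edge of $Y_e$ and $g$ maps $Y_e$ onto itself by a tree automorphism, $g\cdot e$ is again an edge of $Y_e$; by definition of the cylinder this forces $G_{g\cdot e}=G_e$. On the other hand $G_{g\cdot e}=gG_eg^{-1}$, so $gG_eg^{-1}=G_e$ and $g\in N_G(G_e)$, as required.

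The argument is essentially immediate from the definitions, so I do not expect a genuine obstacle; the only point requiring care is the identity $G_{g\cdot e'}=gG_{e'}g^{-1}$ and the fact that a tree automorphism preserving the subtree $Y_e$ permutes its edges. Equivalently, one can phrase the whole proof through the description of $Y_e$ as the fixed-point set $\mathrm{Fix}(G_e)$: since the edge stabilizers all have order $k$, an edge is fixed pointwise by $G_e$ precisely when its stabilizer equals $G_e$, and then the general identity $g\cdot \mathrm{Fix}(G_e)=\mathrm{Fix}(gG_eg^{-1})$ reduces both inclusions to the equality $gG_eg^{-1}=G_e$.
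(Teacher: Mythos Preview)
Your proof is correct and follows essentially the same approach as the paper: both directions use the identity $G_{g\cdot e'}=gG_{e'}g^{-1}$ together with the definition of the cylinder as the set of edges with stabilizer equal to $G_e$. Your argument is in fact slightly more detailed than the paper's, which for the inclusion $N_G(G_e)\subseteq\mathrm{Stab}(Y_e)$ only checks that $g\cdot e$ lies in $Y_e$ and implicitly uses that $G$ permutes cylinders.
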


\begin{proof}If $g$ belongs to $\mathrm{Stab}(Y_e)$, then there exists an edge $\varepsilon \in Y_e$ such that $g e =\varepsilon$, i.e.\ $gG_eg^{-1}=G_{\varepsilon}$. In addition, $G_{\varepsilon}=G_e$ since $\varepsilon$ belongs to the same cylinder as $e$, so $gG_eg^{-1}=G_e$. Conversely, if $g$ belongs to $N_G(G_e)$, then $G_e^g=G_{g e}=G_e$, i.e.\ $g e$ and $e$ are in the same cylinder.\end{proof}

The lemma below follows immediately from the previous lemma and from the fact that a bounded subset in a tree admits a center, which is preserved by every element that preserves this bounded subset.

\begin{lemme}\label{petitlemme}Assume that $Y_e$ has bounded diameter in $T_k$. Then $N_G(G_e)$ is elliptic in $T_k$.\end{lemme}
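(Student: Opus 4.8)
The plan is to reduce the statement to the elementary fact, recalled just above, that a bounded subtree of a tree possesses a canonical center. First I would invoke Lemma \ref{normalisateur} to identify $N_G(G_e)$ with the global stabilizer $\mathrm{Stab}(Y_e)$ of the cylinder $Y_e$ in $T_k$. It therefore suffices to prove that $\mathrm{Stab}(Y_e)$ is elliptic, which shifts the problem from an algebraic statement about normalizers to a purely geometric one about a bounded invariant subtree.

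Next, since by hypothesis $Y_e$ has bounded diameter in $T_k$, it is a bounded subtree, and hence admits a center $c$, for instance the center of the smallest closed ball containing $Y_e$; in a tree this is a single, well-defined point. The key point is that $c$ is determined by $Y_e$ alone, so any simplicial automorphism of $T_k$ that preserves $Y_e$ setwise must also fix $c$. In particular every element of $\mathrm{Stab}(Y_e) = N_G(G_e)$ fixes $c$, whence $N_G(G_e)$ fixes the point $c$ and is elliptic in $T_k$.

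The only mild subtlety—indeed the sole place any care is needed—is that the center $c$ need not be a vertex of $T_k$ but could be the midpoint of an edge, in which case an element of $N_G(G_e)$ might fix $c$ by inverting that edge rather than fixing a vertex. If one wants ellipticity in the strong sense of fixing a vertex, this is settled by passing to the barycentric subdivision of $T_k$, on which $N_G(G_e)$ still acts and in which $c$ is a genuine vertex; otherwise, under the convention that \emph{elliptic} means fixing a point of the tree, the argument above already suffices. Because this is the only obstacle and it is entirely routine, the lemma follows essentially immediately, as announced.
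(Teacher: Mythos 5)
Your proof is correct and follows exactly the paper's argument: the paper also combines Lemma \ref{normalisateur} (identifying $N_G(G_e)$ with the global stabilizer of $Y_e$) with the fact that a bounded subset of a tree has a center preserved by every isometry stabilizing that subset. Your extra remark about the center possibly being an edge midpoint, handled by barycentric subdivision, is a reasonable precision the paper leaves implicit.
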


The stabilizer of the edge $\varepsilon=(x, Y_e)$ is $G_{\varepsilon}=G_x\cap G_{Y_e}=N_{G_x}(G_e)$. Note that the inclusion $G_e \subset G_{\varepsilon}$ may be strict. As a consequence, $T$ and $T_c$ do not belong to the same deformation space in general. Note that $T_c$ may be trivial even if $T$ is not.

\subsection{An equivalence relation}
Given an element $g$ in a group $G$, we write $\mathrm{ad}(g)$ for the inner automorphism $x\mapsto g x g^{-1}$.

\begin{de}\label{equiv}Let $G$ be a non-elementary virtually free group. Let $G'$ be a group. We say that two homomorphisms $\phi, \phi': G \rightarrow G'$ are \emph{equivalent}, denoted by $\phi\sim \phi'$, if for every finite subgroup $H$ of $G$, there exists an element $g'\in G'$ such that $\phi$ and $\phi'$ coincide on $H$ up to conjugacy by $g'$, i.e.\ $\phi'_{\vert H}=\mathrm{ad}(g')\circ\phi_{\vert H}$.\end{de}

The following lemma shows that the previous equivalence relation on $\mathrm{Hom}(G,G')$ can be expressed using an existential formula.

\begin{lemme}\label{related}
Let $G$ be a finitely generated virtually free group, and let $\lbrace s_1,\ldots ,s_n\rbrace$ be a generating set of $G$. Let $G'$ be a group. There exists an existential formula $\psi_G(x_1,\ldots , x_{2n})$ with $2n$ free variables such that, for every morphisms $\phi,\phi'\in \mathrm{Hom}(G,G')$, the following assertions are equivalent:
\begin{enumerate}
\item $\phi$ and $\phi'$ are equivalent in the sense of Definition \ref{equiv};
\item $G'$ satisfies $\psi_G\left(\phi(s_1),\ldots ,\phi(s_n),\phi'(s_1),\ldots ,\phi'(s_n)\right)$.
\end{enumerate}
\end{lemme}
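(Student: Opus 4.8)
The plan is to encode Definition~\ref{equiv} directly. The key structural fact is that a finitely generated virtually free group $G$ has only finitely many conjugacy classes of finite subgroups; let me fix representatives $H_1,\ldots,H_m$ of these classes. Since each $H_i$ is finite, I can fix a finite generating set (indeed an explicit finite list of elements) for each $H_i$, and, crucially, express each such element as a fixed word $w_{i,\ell}(s_1,\ldots,s_n)$ in the generators of $G$. The point is that whether $\phi\sim\phi'$ should be testable just from the images $\phi(s_1),\ldots,\phi(s_n)$ and $\phi'(s_1),\ldots,\phi'(s_n)$, because every finite subgroup's image is determined by these via the fixed words $w_{i,\ell}$.

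\smallskip

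First I would argue that the quantifier over \emph{all} finite subgroups $H$ in Definition~\ref{equiv} can be replaced by a quantifier over the finitely many representatives $H_1,\ldots,H_m$. Indeed, any finite subgroup $H$ is conjugate in $G$ to some $H_i$, say $H=g H_i g^{-1}$; if $\phi'_{\vert H_i}=\mathrm{ad}(g'_i)\circ\phi_{\vert H_i}$, then a short computation using $\mathrm{ad}(\phi'(g))$ and $\mathrm{ad}(\phi(g))$ shows that $\phi,\phi'$ also coincide on $H$ up to conjugacy. Hence $\phi\sim\phi'$ if and only if for each $i\in\{1,\ldots,m\}$ there exists $g'_i\in G'$ with $\phi'(w_{i,\ell})=g'_i\,\phi(w_{i,\ell})\,{g'_i}^{-1}$ for all generators $w_{i,\ell}$ of $H_i$. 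This is a finite conjunction of conditions, each involving one existentially quantified element $g'_i$.

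\smallskip

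Next I would build the formula. For each $i$, introduce a fresh variable $y_i$ and write the conjunction over the finitely many generators $w_{i,\ell}$ of $H_i$ of the equations
\[
y_i\,w_{i,\ell}(x_1,\ldots,x_n)\,y_i^{-1}=w_{i,\ell}(x_{n+1},\ldots,x_{2n}),
\]
where the first block of variables $x_1,\ldots,x_n$ is to be interpreted as $\phi(s_1),\ldots,\phi(s_n)$ and the second block $x_{n+1},\ldots,x_{2n}$ as $\phi'(s_1),\ldots,\phi'(s_n)$. Then set
\[
\psi_G(x_1,\ldots,x_{2n}):\ \exists y_1\cdots\exists y_m\ \bigwedge_{i=1}^{m}\bigwedge_{\ell} y_i\,w_{i,\ell}(\bm{x})\,y_i^{-1}=w_{i,\ell}(\bm{x}'),
\]
which is manifestly an existential formula. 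Evaluating at $x_j=\phi(s_j)$ and $x_{n+j}=\phi'(s_j)$ gives exactly the condition from the previous paragraph, establishing the equivalence of (1) and (2).

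\smallskip

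The main obstacle is the reduction in the second paragraph: one must be careful that it suffices to test the equivalence relation on the representatives $H_1,\ldots,H_m$ rather than on all finite subgroups, and that the witnessing conjugators for the $H_i$ genuinely produce witnessing conjugators for arbitrary conjugates $g H_i g^{-1}$. This rests on the finiteness of the number of conjugacy classes of finite subgroups in a finitely generated virtually free group (a standard fact, following from the accessibility of such groups and the finiteness of their Stallings--Dunwoody decompositions), together with the bookkeeping that conjugation by $\phi(g)$ on the $\phi$-side and by $\phi'(g)$ on the $\phi'$-side transports the single witness $g'_i$ to a witness for the conjugated subgroup. Everything else is a routine translation of a finite set of conjugacy conditions into an existential first-order formula.
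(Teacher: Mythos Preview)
Your proof is correct and follows essentially the same approach as the paper's: choose finitely many representatives $H_i$ of the conjugacy classes of finite subgroups, express their elements (or generators) as fixed words in the $s_j$, and bundle the conjugacy conditions into a single existential formula. The only cosmetic differences are that the paper quantifies over all elements of each $H_i$ rather than a generating set (immaterial, since both $\phi'$ and $\mathrm{ad}(g')\circ\phi$ are homomorphisms) and that you spell out more carefully the reduction from arbitrary finite subgroups to the representatives $H_i$, which the paper leaves implicit.
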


\begin{proof}
Let $H_1,\ldots,H_r$ be finite subgroups of $G$ such that any finite subgroup of $G$ is conjugate to some $H_i$. For every $1\leq i\leq r$, let $h_{i,1},\ldots ,h_{i,k_i}$ denote the elements of $H_i$. For every $1\leq i\leq r$ and $1\leq j\leq k_i$, there exists a word $w_{i,j}(x_1,\ldots ,x_n)$ in $n$ variables such that $h_{i,j}=w_{i,j}(s_1,\ldots ,s_n)$. Define \[\psi_G(x_1,\dots,x_{2n}):\exists y_1\ldots\exists y_{r} \bigwedge_{i=1}^{r}\bigwedge_{j=1}^{k_i}w_{i,j}(x_1,\ldots ,x_n)=y_iw_{i,j}(x_{n+1},\ldots ,x_{2n}){y_i}^{-1}.\]
Since $\phi(h_{i,j})=w_{i,j}(\phi(s_1),\ldots ,\phi(s_n))$ and $\phi'(h_{i,j})=w_{i,j}\left(\phi'(s_1),\ldots ,\phi'(s_n)\right)$ for every $1\leq i\leq r$ and $1\leq j\leq k_i$, the sentence $\psi_G\left(\phi(s_1),\ldots , \phi(s_n),\phi'(s_1),\ldots ,\phi'(s_n)\right)$ is satisfied by $G'$ if and only if the homomorphisms $\phi$ and $\phi'$ coincide up to conjugacy on every finite subgroup of $G$.\end{proof}

\begin{de}\label{rigid}Let $G$ be a non-elementary virtually free group. We say that $G$ is \emph{rigid} if every endomorphism $\phi : G\rightarrow G$ such that $\phi\sim \mathrm{id}_G$ is an automorphism.\end{de}

In Section \ref{coHrigid}, we shall prove that co-Hopfian virtually free groups are rigid.

\section{A property of virtually free groups}

A finitely generated group $G$ is virtually free if and only if it splits as a finite graph of finite groups. Such a splitting is called a \emph{Stallings splitting (or tree)} of $G$. A Stallings tree $T$ of $G$ is said to be \emph{reduced} if there is no edge of the form $e=[v,w]$ such that $G_v=G_e=G_w$ and such that $v$ and $w$ are in distinct orbits. A vertex of $T$ is called \emph{redundant} if it has degree $2$. The tree $T$ is called \emph{non-redundant} if every vertex is non-redundant. A Stallings splitting is not unique in general, but the conjugacy classes of finite vertex groups are the same in all reduced Stallings splittings of $G$. The \emph{Stallings deformation space} of $G$, denoted by $\mathcal{D}(G)$, is the set of Stallings trees of $G$ up to equivariant isometry. 

The following result is well-known, see for instance Lemmas 2.20 and 2.22 in \cite{DG11}, and Definition 2.19 in \cite{DG11} (definition of an isomorphism of graphs of groups).

\begin{prop}\label{DG}Let $T$ and $T'$ be two Stallings trees of $G$. The following two assertions are equivalent.
\begin{enumerate}
\item The quotient graphs of groups $T/G$ and $T'/G$ are isomorphic.
\item There exist an automorphism $\sigma$ of $G$ and a $\sigma$-equivariant isometry $f : T \rightarrow T'$. 
\end{enumerate}
\end{prop}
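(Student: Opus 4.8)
The plan is to prove the two implications separately; $(2)\Rightarrow(1)$ is straightforward, while $(1)\Rightarrow(2)$ carries the real content and is essentially an instance of the structure theorem of Bass--Serre theory. For $(2)\Rightarrow(1)$, suppose we are given an automorphism $\sigma$ of $G$ and a $\sigma$-equivariant isometry $f : T \to T'$. First I would observe that $f$ descends to an isomorphism $\bar f : T/G \to T'/G$ of the underlying quotient graphs: two points $x,y\in T$ lie in the same $G$-orbit if and only if $f(x),f(y)$ do, since $y=gx$ forces $f(y)=\sigma(g)f(x)$ and conversely, using that $\sigma$ is bijective. Next, for a vertex $v$ of $T$ one has $g\in G_v \iff \sigma(g)\in G_{f(v)}$, so $\sigma$ restricts to an isomorphism $G_v \xrightarrow{\sim} G_{f(v)}$, and similarly for edge stabilizers; these isomorphisms are compatible with the edge-to-vertex inclusions because $f$ commutes with the incidence maps of $T$ and $T'$. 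Together, $\bar f$ and these group isomorphisms form an isomorphism of graphs of groups in the sense of Definition 2.19 of \cite{DG11}.

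For $(1)\Rightarrow(2)$, recall that a Stallings tree $T$ is recovered as the Bass--Serre tree of its quotient graph of groups $\mathbb{A}:=T/G$: after choosing a maximal subtree of $\mathbb{A}$ and lifts of its vertices and edges to $T$, one obtains an isomorphism $\mu:\pi_1(\mathbb{A})\xrightarrow{\sim} G$ under which $T$ is $\mu$-equivariantly isometric to the Bass--Serre tree $\widetilde{\mathbb{A}}$; doing the same for $T'$ yields $\mu':\pi_1(\mathbb{A}')\xrightarrow{\sim} G$. An isomorphism of graphs of groups $\Phi:\mathbb{A}\to\mathbb{A}'$ induces, by functoriality, an isomorphism $\Phi_*:\pi_1(\mathbb{A})\to\pi_1(\mathbb{A}')$ together with a $\Phi_*$-equivariant isometry $\widetilde{\Phi}:\widetilde{\mathbb{A}}\to\widetilde{\mathbb{A}'}$. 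I would then set $\sigma:=\mu'\circ\Phi_*\circ\mu^{-1}$, an automorphism of $G$, and let $f:T\to T'$ be obtained by transporting $\widetilde{\Phi}$ through the equivariant identifications $T\cong\widetilde{\mathbb{A}}$ and $T'\cong\widetilde{\mathbb{A}'}$; by construction $f$ is a $\sigma$-equivariant isometry.

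The hard part is the bookkeeping behind $\Phi_*$ and the verification that $f$ is a genuine $\sigma$-equivariant isometry. Concretely, $\Phi$ consists of a graph isomorphism, compatible isomorphisms of the vertex and edge groups, and conjugating elements witnessing compatibility with the edge inclusions; one must check that $\sigma$, defined on the generators of $G$ arising from the vertex groups and from the stable letters associated to edges outside the chosen maximal tree, respects the defining relations of the two fundamental-group presentations, so that it is a well-defined isomorphism, and that the prescribed assignment on lifted vertices and edges extends $\sigma$-equivariantly to an adjacency-preserving bijection $T\to T'$. Since $T/G$ and $T'/G$ are finite graphs of finite groups, all these checks are finite and follow the lines of Lemmas 2.20 and 2.22 of \cite{DG11}, which I would invoke rather than reproduce the full computation.
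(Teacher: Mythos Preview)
Your proposal is correct and aligns with the paper's treatment: the paper does not give an independent proof of this proposition but simply states that it is well known and refers to Definition~2.19 and Lemmas~2.20 and~2.22 of \cite{DG11}, exactly the sources you invoke. Your write-up merely unpacks the content of those lemmas (the descent of a $\sigma$-equivariant isometry to a graph-of-groups isomorphism, and the converse via the functorial lift to Bass--Serre trees), so there is no substantive difference in approach.
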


In the latter case, we use the notation $T'=T^{\sigma}$. This is not ambiguous since we consider elements in $\mathcal{D}(G)$ up to equivariant isometry. The following proposition claims that $\mathcal{D}(G)$ is cocompact under the action of $\mathrm{Aut}(G)$. We refer the reader to \cite[Proposition 2.9]{And18b}.

\begin{prop}\label{compacité}Let $G$ be a virtually free group. There exist finitely many trees $S_1,\ldots, S_n$ in $\mathcal{D}(G)$ such that, for every non-redundant tree $T\in \mathcal{D}(G)$, there exist an automorphism $\sigma$ of $G$ and an integer $1\leq \ell\leq n$ such that $T=S_{\ell}^{\sigma}$.
\end{prop}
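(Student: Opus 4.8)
The plan is to translate the statement into a finiteness statement about quotient graphs of groups, and then to bound their combinatorial complexity.

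First I would use Proposition \ref{DG} to reduce to a counting problem. By that proposition, for two Stallings trees $T,T'\in\mathcal{D}(G)$ one has $T'=T^{\sigma}$ for some $\sigma\in\mathrm{Aut}(G)$ if and only if the quotient graphs of groups $T/G$ and $T'/G$ are isomorphic. Hence the conclusion is equivalent to the assertion that, as $T$ ranges over the non-redundant trees of $\mathcal{D}(G)$, the quotient graphs of groups $T/G$ fall into only finitely many isomorphism classes. It therefore suffices to exhibit a constant $C=C(G)$ bounding the number of vertices and edges of $T/G$, together with a uniform bound on the orders of the vertex and edge groups. Indeed, once such bounds are known the finiteness is routine: there are finitely many finite graphs with at most $C$ edges, finitely many isomorphism types of finite groups of order below a fixed bound to place on the vertices and edges, and, between two finite groups of bounded order, only finitely many monomorphisms to serve as the edge maps; so there are only finitely many isomorphism types of graphs of groups of bounded complexity.

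Next I would establish the two kinds of bounds. Since $G$ has only finitely many conjugacy classes of finite subgroups, there is an integer $N$ bounding the order of every finite subgroup, and in particular of every vertex and edge group of any Stallings tree. The first Betti number $b_1$ of the underlying graph $\Gamma$ of $T/G$ is bounded as well: collapsing all the vertex groups to the trivial group defines a surjection from $G$ onto $\pi_1(\Gamma)$, a free group of rank $b_1$, so $b_1$ is at most the cardinality of a fixed generating set of $G$. In view of the relation $E=b_1+V-1$ for a connected graph, it remains to bound the number $V$ of vertices. For this I would collapse $T$ to a reduced tree $T_r\in\mathcal{D}(G)$, obtained by collapsing every edge whose group coincides with the group of one of its endpoints. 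By the fact recalled before Proposition \ref{DG}, the conjugacy classes of finite vertex groups are the same in all reduced Stallings trees of $G$; consequently $T_r$ has a bounded number of vertices and hence bounded complexity. Now $T$ is a refinement of $T_r$, obtained by blowing up its vertices: each vertex $u$ of $T_r$ is replaced by a graph of groups with fundamental group the finite group $G_u$. Since a finite group acting on a tree has a global fixed point, it admits no nontrivial splitting, so each of these blow-up pieces collapses back to a single vertex; combined with minimality of $T$ (which forbids a pendant edge whose group equals the group of the leaf it carries) and non-redundancy (which forbids redundant, degree-two, vertices), this forces each blow-up to have a bounded number of vertices, and therefore bounds $V$.

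The step I expect to be the main obstacle is precisely this last complexity bound. The reduction via Proposition \ref{DG} and the final count of bounded graphs of groups are formal, and the bounds on $N$ and $b_1$ are immediate; the delicate point is to control the non-redundant but possibly non-reduced trees of $\mathcal{D}(G)$, where the interplay of minimality, non-redundancy, and the non-splittability of finite groups must be made precise. This is the heart of the accessibility phenomenon underlying the statement, and is exactly what is carried out in \cite[Proposition 2.9]{And18b}.
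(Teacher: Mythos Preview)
The paper does not give a proof of this proposition at all: it simply refers the reader to \cite[Proposition~2.9]{And18b}. Your proposal therefore goes further than the paper itself, and the strategy you outline---reducing via Proposition~\ref{DG} to a finiteness statement about isomorphism classes of quotient graphs of groups, then bounding separately the orders of vertex/edge groups, the first Betti number, and the number of vertices---is the natural and correct route, and is indeed what underlies the cited result.

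One comment on the sketch: the sentence ``since a finite group \ldots\ admits no nontrivial splitting, so each of these blow-up pieces collapses back to a single vertex'' is a little too quick as written. A finite group can certainly be the fundamental group of a graph of groups whose underlying graph has many vertices (e.g.\ a segment $A\!-\!A\!-\!\cdots\!-\!A$ with all edge maps isomorphisms). What really bounds the blow-up pieces is the combination of minimality and non-redundancy of $T$ together with the fact that in such a piece all vertex groups are subgroups of the fixed finite group $G_u$, so strict inclusions of edge groups into vertex groups can only be chained boundedly many times; degree-one vertices are constrained by minimality and degree-two vertices are forbidden by non-redundancy, which together control the remaining ``isomorphic'' edges. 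You flag exactly this step as the delicate one and defer to \cite[Proposition~2.9]{And18b} for it, which is precisely what the paper does, so your proposal is consistent with the paper's treatment.
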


The following proposition plays an important role in the proofs of our results. Note that when $G'$ is a torsion-free hyperbolic group and $G$ is a one-ended finitely generated group, a similar statement was proved by Sela in \cite{Sel09}. This result was generalized by Reinfeldt and Weidmann in \cite{RW14} without assuming torsion-freeness. The main point of the proposition below is that $G$ is not one-ended (except if it is finite).

\begin{prop}\label{sa}Let $G$ and $G'$ be two finitely generated virtually free goups. There exists a finite subset $F$ of $G\setminus \lbrace 1\rbrace$ such that, for every non-injective homomorphism $\phi : G\rightarrow G'$, there exists an automorphism $\sigma\in\mathrm{Aut}(G)$ such that $\ker(\phi\circ \sigma)\cap F\neq \varnothing$.
\end{prop}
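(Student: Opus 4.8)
The plan is to prove the contrapositive statement at the level of the deformation space $\mathcal{D}(G)$, exploiting the cocompactness furnished by Proposition \ref{compacité}. Suppose toward a contradiction that no finite $F$ works; then for every finite $F\subset G\setminus\{1\}$ there is a non-injective $\phi$ whose kernel avoids all the $\mathrm{Aut}(G)$-translates of $F$. Equivalently, I want to show that there is a uniform bound — depending only on $G$ and $G'$ — beyond which a homomorphism can no longer be non-injective without its kernel meeting a prescribed finite set, up to precomposition by an automorphism. The natural witness set $F$ should be built from the finitely many model trees $S_1,\dots,S_n$ of Proposition \ref{compacité}: for each $S_\ell$, a non-injective homomorphism must kill something, and I want to collect a finite list of candidate ``small'' elements that any kernel is forced to contain after normalizing the source splitting by an automorphism.

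The key mechanism is a \emph{shortening/acylindrical accessibility} argument in the spirit of Sela's work, adapted to the virtually free (hence infinitely-ended, non-one-ended) setting, which is precisely the point the paper flags as the main novelty. First I would fix a finite generating set and, given a non-injective $\phi:G\to G'$, consider the action of $G'$ (or of $\phi(G)$) on a tree and pull back the structure to $G$. Because $G$ is virtually free, its splittings over finite groups are controlled by the Stallings deformation space, and I would use Proposition \ref{compacité} to replace $T$ by one of the finitely many $S_\ell$ after applying a suitable $\sigma\in\mathrm{Aut}(G)$, so that $\phi\circ\sigma$ is defined with respect to a \emph{fixed} finite graph of finite groups. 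The edge and vertex groups of $S_\ell$ are finite, hence their elements form a finite subset of $G$; a non-injective homomorphism with source structured by $S_\ell$ must either fail to be injective on one of the (finitely many) finite vertex groups, or must identify two elements coming from distinct ``pieces'' of the splitting, and in the latter case a ping-pong / normal-form estimate produces a short nontrivial element of the kernel expressible as a word of bounded length in the generators.

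Concretely, the core steps are: (i) enumerate the finitely many model trees $S_1,\dots,S_n$ and, for each, the finite vertex and edge groups together with a finite set $W$ of short relative normal forms; (ii) set $F$ to be the (finite) set of all nontrivial elements arising as such short words and as nontrivial elements of the finite vertex groups, across all $S_\ell$; (iii) given any non-injective $\phi$, use Proposition \ref{compacité} to find $\sigma$ with $T=S_\ell^\sigma$, so that $\phi\circ\sigma$ is a non-injective homomorphism compatible with the fixed model $S_\ell$; (iv) argue that non-injectivity on this fixed finite structure forces $\ker(\phi\circ\sigma)$ to contain an element of $F$. The main obstacle is step (iv): showing that the kernel element can always be taken \emph{short}, i.e.\ drawn from a finite list independent of $\phi$. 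This is where the non-one-endedness is both the difficulty and the resource — there is no single rigid vertex group to shorten against, so instead I would exploit that finite-order elements and the combinatorics of reduced words in a finite graph of finite groups leave only finitely many ``first'' coincidences possible, and a coincidence deep in the tree can be pushed back to a bounded one by an acylindrical-accessibility bound on the number of orbits of edges. Controlling this uniformly across the (finitely many) models $S_\ell$, rather than for a single splitting, is the crux; once that bound is in hand, the finite set $F$ assembled in (ii) does the job.
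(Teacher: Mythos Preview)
Your outline has the right global architecture (cocompactness of $\mathcal{D}(G)$ via Proposition~\ref{compacité}, then a finite witness set assembled from the finitely many model trees), but it applies the automorphism at the wrong moment, and this is a genuine gap, not a cosmetic one.

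In your step~(iii) you propose to normalise the \emph{source} Stallings tree to some $S_\ell$ by an automorphism $\sigma$, and then in step~(iv) to argue that $\ker(\phi\circ\sigma)$ must contain a short element of $S_\ell$. But fixing the source splitting in advance gives you no control over where the kernel sits: a non-injective $\phi$ can have its shortest kernel element of arbitrarily large normal-form length with respect to a fixed $S_\ell$, and neither ping-pong nor a generic ``acylindrical accessibility bound on the number of orbits of edges'' will manufacture a uniform bound here. The finiteness of edge-orbits is a statement about the tree, not about $\phi$, so it cannot by itself shorten the kernel.

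What the paper actually does is the reverse order. One first builds a $\phi$-equivariant map $f:T\to T'$ from a Stallings tree of $G$ to a Stallings tree of $G'$, and then (after subdividing) performs a sequence of \emph{collapses and Stallings folds on $T$ dictated by $f$}: whenever $f$ sends an edge to a point one collapses it, and whenever $f$ identifies two adjacent edges one folds them. This produces a sequence $T\to T_1\to T_2\to\cdots$ of $G$-trees which stays inside $\mathcal{D}(G)$ as long as all vertex groups remain finite; the sequence terminates because the number of edge-orbits strictly decreases. At the last step $T_k\to T_{k+1}$ a vertex group becomes infinite: concretely one produces a subgroup of the form $\langle G_v,G_w\rangle$ or $\langle G_w,g\rangle$ which is infinite in $G$ but whose $\phi$-image fixes a vertex of $T'$ and is therefore finite. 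Hence the $N$th power of any infinite-order element of that subgroup lies in $\ker(\phi)$, where $N$ is the maximal order of a torsion element of $G'$. Only \emph{now} does one invoke Proposition~\ref{compacité}: it is the pre-terminal tree $T_k$ (made non-redundant) that one writes as $S_\ell^{\sigma}$, and the finite set $F_\ell$ attached to $S_\ell$ consists of one such $N$th power for each of the finitely many possible ``incipient infinite vertex groups'' in $S_\ell$, together with the nontrivial elements of representatives of the conjugacy classes of finite subgroups (to handle the case where $\phi$ is already non-injective on a vertex group). The automorphism $\sigma$ thus depends on $\phi$ through the folding sequence, and that is precisely what makes the kernel element short relative to $S_\ell$.

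In short: the missing idea is the $\phi$-guided folding process in the target tree, which transports the non-injectivity of $\phi$ into a visible infinite vertex group in a \emph{new} Stallings tree of $G$; Proposition~\ref{compacité} is then applied to that new tree, not to the original one.
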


\begin{proof}Let $\Delta$ and $\Delta'$ be two Stallings splittings of $G$ and $G'$ respectively. Let $T$ and $T'$ denote their Bass-Serre trees. Let $H_1,\ldots,H_r$ be finite subgroups of $G$ such that any finite subgroup of $G$ is conjugate to $H_i$ for some $1\leq i\leq r$.

Let $\phi : G\rightarrow G'$ be a non-injective homomorphism. As a first step, we build a $\phi$-equivariant map $f:T\rightarrow T'$. Let $v_1,\ldots ,v_n$ be some representatives of the orbits of vertices for the action of $G$ on the Bass-Serre tree $T$ of $\Delta$. For every $1\leq k\leq n$, $\phi(G_{v_k})$ is finite, and thus it fixes a vertex $v'_k\in T'$. Set $f(v_k)=v'_k$. Then, define $f$ on each vertex of $T$ by $\phi$-equivariance. It remains to define $f$ on the edges of $T$: if $e$ is an edge of $T$, with endpoints $v_1$ and $v_2$, there exists a unique path $e'$ from $f(v_1)$ to $f(w_2)$ in $T'$; we define $f(e)=e'$. 

If $\phi$ is not injective on the vertex groups of $T$, then $\phi$ is not injective on $H_i$ for some $1\leq i\leq r$. From now on, let us assume that $\phi$ is injective on the vertex groups of $T$.

Note that $f$ sends an edge of $T$ to a path of $T'$. Up to subdivising the edges of $T$, one can assume that $f$ sends an edge to an edge or a vertex of $T'$. Moreover, note that $f$ is not an isometry: indeed, there is a non-trivial element $g\in G$ such that $\phi(g)=1$, and hence $f(g v)=\phi(g) f(v)=f(v)$, and $g v$ is distinct from $v$, otherwise $g$ would belong to $G_v$, contradicting the assumption that $\phi$ is injective on the vertex groups of the tree $T$. As a consequence, $f$ maps an edge of $T$ to a point, or folds two edges. 

\emph{Case 1.} If $f$ maps the edge $e=[v,w]$ of $T$ to a point in $T'$, we collapse $e$ in $T$, as well as all its translates under the action of $G$. Collapsing $e$ gives rise to a new $G$-tree $T_1$ with a new vertex $x$ labelled by $G_x=\langle G_v,G_w\rangle$ if $v$ and $w$ are not in the same orbit, or $G_x=\langle G_v,g\rangle$ if $w=g v$.

\emph{Case 2.} Suppose that $f$ folds some pair of edges, as pictured below. 

\begin{center}
\begin{tikzpicture}[scale=1]
\node[draw,circle, inner sep=1.7pt, fill, label=below:{$w$}] (A1) at (2,0) {};
\node[draw,circle, inner sep=1.7pt, fill, label=below:{${w'}$}] (A2) at (2,2) {};
\node[draw,circle, inner sep=1.7pt, fill, label=below:{$v$}] (A3) at (0,1) {};
\node[draw=none, label=below:{$e$}] (B1) at (1,0.5) {};
\node[draw=none, label=below:{$e'$}] (B2) at (1,2.2) {};
\node[draw=none, label=below:{}] (B3) at (7,2) {};
\node[draw,circle, inner sep=1.7pt, fill, label=above:{$f(w)=f(w')$}] (A4) at (8,1) {};
\node[draw,circle, inner sep=1.7pt, fill, label=below:{$f(v)$}] (A5) at (6,1) {};

\draw[-,>=latex] (A3) to (A1) ;
\draw[-,>=latex] (A3) to (A2);
\draw[-,>=latex] (A4) to (A5);
\draw[->,>=latex, dashed] (3,1) to (5,1);
\end{tikzpicture}
\end{center}
We fold $e$ and $e'$ together in $T$, as well as all their translates under the action of $G$. Folding $e$ and $e'$ gives rise to a new $G$-tree $T_1$ with a new vertex $x$ labelled by $G_x=\langle G_w,G_w'\rangle$ if $w$ and $w'$ are not in the same orbit, or $G_x=\langle G_w,g\rangle$ if ${w'}=g w$.

The map $f: T \rightarrow T'$ factors through the quotient map $\pi_1 : T \rightarrow T_1$. Let $f_1 : T_1 \rightarrow T'$ be the map such that $f = f_1\circ \pi_1$. If $T_1$ belongs the Stallings deformation space $\mathcal{D}(G)$, then the same argument as above shows that $f_1$ is not an isometry, and one can perform another collapsing or folding of edges. We get a sequence $T\rightarrow T_1\rightarrow T_2\rightarrow\cdots$. Then, observe that $T$ has only finitely many orbits of edges under the action of $G$, which implies that one can perform only finitely many collapsing or folding of edges. Hence the previous sequence of trees is necessarily finite. Let $T_{k+1}$ be the last tree in the sequence, with $k\geq 0$. Note that $T_{k+1}$ does not belong to the Stallings deformation space, otherwise one can perform one more collapsing or folding. Therefore, the last collapsing or folding in the sequence, namely $T_k\rightarrow T_{k+1}$, gives rise to an infinite vertex group. More precisely, one of the following holds, where $N$ denotes the maximal order of an element of $G'$ of finite order:
\begin{itemize}
    \item either there is an edge $[v,w]$ in $T_k$ such that $\langle G_v,G_w\rangle$ is infinite and $\phi$ kills the $N$th power of any element of $\langle G_v,G_w\rangle$ of infinite order,
    \item or there exist two edges $[v,w]$ and $[v,w']$ such that $w,w'$ are not in the same orbit, $\langle G_w,G_{w'}\rangle$ is infinite and $\phi$ kills the $N$th power of any element of $\langle G_w,G_{w'}\rangle$ of infinite order,
    \item or there exist two edges $[v,w]$ and $[v,w']$ such that $w'=gw$, $\langle G_w,g\rangle$ is infinite and $\phi$ kills the $N$th power of any element of $\langle G_w,g\rangle$ of infinite order.
\end{itemize}
Hence, one can associate to $T_k$ a finite set of elements of $G$ of infinite order such that $\phi$ kills an element of this finite set. 

Now, up to forgetting the possibly redundant vertices of $T_k$, one can assume that $T_k$ is non-redundant. By Proposition \ref{compacité}, there exist an automorphism $\sigma$ of $G$ and an integer $1\leq \ell\leq n$ such that $T_k=S_{\ell}^{\sigma}$. 

As a conclusion, one can associate to every tree $S_\ell$, with $1\leq \ell\leq n$, a finite set $F_{\ell}$ of elements of $G$ of infinite order such that for any non-injective morphism $\phi : G\rightarrow G'$, there exists $\sigma\in \mathrm{Aut}(G)$ such that $\phi\circ\sigma$ kills an element of $F_{\ell}$ for some $1\leq \ell\leq n$ or an element of $H_i$ for some $1\leq i\leq r$. Last, define $F=F_1\cup \cdots \cup F_n\cup H_1\cup \cdots \cup H_r$.\end{proof}

Before stating the next proposition, let us define a subgroup of $\mathrm{Aut}(G)$.

\begin{de}\label{Aut0}We denote by $\mathrm{Aut}_0(G)$ the subgroup of $\mathrm{Aut}(G)$ defined as follows: \[\mathrm{Aut}_0(G)=\lbrace \phi\in \mathrm{Aut}(G) \ \vert \ \phi\sim\mathrm{id}_G\rbrace.\]\end{de}

The following lemma is straightforward since a virtually free group $G$ has only finitely many conjugacy classes of finite subgroups.

\begin{lemme}\label{finite_index}The subgroup $\mathrm{Aut}_0(G)$ has finite index in $\mathrm{Aut}(G)$.\end{lemme}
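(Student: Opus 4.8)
The lemma asserts that $\mathrm{Aut}_0(G)$ has finite index in $\mathrm{Aut}(G)$, where $\mathrm{Aut}_0(G)=\lbrace\phi\in\mathrm{Aut}(G)\mid\phi\sim\mathrm{id}_G\rbrace$ and $\sim$ is the equivalence relation of Definition \ref{equiv}: two homomorphisms are equivalent if they agree, up to conjugacy, on every finite subgroup.

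\textbf{The plan.} The key structural fact I would exploit is that a finitely generated virtually free group $G$ has only finitely many conjugacy classes of finite subgroups; let $H_1,\ldots,H_r$ be representatives. The relation $\phi\sim\mathrm{id}_G$ is really a condition about how $\phi$ acts on these finitely many subgroups up to conjugacy. My first step is to observe that the coset $\phi\,\mathrm{Aut}_0(G)$ is determined by a finite amount of data, namely the conjugacy classes of the restrictions $\phi_{|H_i}$ for $i=1,\ldots,r$. Concretely, I would define for each $i$ the data of the subgroup $\phi(H_i)$ together with the isomorphism $H_i\to\phi(H_i)$, recorded up to conjugacy in $G$. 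If two automorphisms $\phi,\psi$ induce the same such data on every $H_i$, then $\psi^{-1}\phi$ agrees with $\mathrm{id}_G$ up to conjugacy on each $H_i$; since every finite subgroup of $G$ is conjugate to one of the $H_i$, this yields $\psi^{-1}\phi\sim\mathrm{id}_G$, i.e. $\phi$ and $\psi$ lie in the same left coset of $\mathrm{Aut}_0(G)$.

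\textbf{Finiteness of the data.} The crux is therefore to check that there are only finitely many possible values of this data. For a single finite subgroup $H_i$, an automorphism sends it to some finite subgroup of $G$, which up to conjugacy is one of the finitely many classes $[H_1],\ldots,[H_r]$; and there are only finitely many isomorphisms between two fixed finite groups. The one point that needs a little care is the ambiguity in the conjugating element: having fixed the target $H_j$ and an abstract isomorphism, the map $H_i\to G$ is determined up to post-composition by conjugation by an element of $N_G(H_j)$, and it is the class in $\mathrm{Hom}(H_i,G)/\text{conj}$ that matters for the relation $\sim$. Since $\sim$ only remembers the restriction up to conjugacy, the relevant invariant takes finitely many values. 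Collecting these invariants over all $i$ gives a map from $\mathrm{Aut}(G)/\mathrm{Aut}_0(G)$ (as left cosets) into a finite set, and the previous paragraph shows this map is injective; hence $\mathrm{Aut}_0(G)$ has finite index.

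\textbf{Expected obstacle.} I expect the only genuinely delicate point to be verifying that $\mathrm{Aut}_0(G)$ is actually a subgroup and that the invariant is well-defined on cosets — that is, that $\sim$ interacts well with composition. One must confirm that if $\phi\sim\mathrm{id}_G$ and $\psi\sim\mathrm{id}_G$ then $\phi\psi\sim\mathrm{id}_G$, and that $\phi\sim\mathrm{id}_G$ implies $\phi^{-1}\sim\mathrm{id}_G$; these follow because agreement up to conjugacy on each $H_i$ is preserved under composition and inversion (using that $\psi$ permutes the conjugacy classes of finite subgroups). Once this bookkeeping is in place, the finiteness is immediate from the finiteness of the conjugacy classes of finite subgroups together with the finiteness of $\mathrm{Aut}(H_i)$ and of the relevant sets of subgroups, so I would keep the write-up short, as the lemma is called ``straightforward'' precisely because all the heavy lifting is the standard fact about finitely many conjugacy classes of finite subgroups in a virtually free group.
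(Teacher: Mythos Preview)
Your proposal is correct and follows exactly the line the paper has in mind: the paper does not even write out a proof, merely declaring the lemma ``straightforward since a virtually free group $G$ has only finitely many conjugacy classes of finite subgroups'', and your argument is precisely the natural unpacking of that remark. Your coset-invariant map into the finite product of sets $\prod_i \mathrm{Hom}(H_i,G)/\text{conj}$ is the right way to make it rigorous, and the bookkeeping you flag (closure under composition and inversion, propagation from the $H_i$ to all finite subgroups) is routine.
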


Then, write $\mathrm{Aut}(G)=\sigma_1\circ \mathrm{Aut}_0(G)\cup\cdots\cup \sigma_N\circ \mathrm{Aut}_0(G)$ where $N=[\mathrm{Aut}(G):\mathrm{Aut}_0(G)]$, and observe that Proposition \ref{sa} remains true if $\mathrm{Aut}(G)$ and $F$ are replaced by $\mathrm{Aut}_0(G)$ and $\sigma_1(F)\cup\cdots\cup \sigma_N(F)$ respectively. Hence the following result follows immediately from Proposition \ref{sa}.

\begin{prop}\label{sa2}Let $G$ and $G'$ be two finitely generated virtually free goups. There exists a finite subset $F$ of $G\setminus \lbrace 1\rbrace$ such that, for every non-injective homomorphism $\phi : G\rightarrow G'$, there exists an automorphism $\sigma\in\mathrm{Aut}_0(G)$ such that $\ker(\phi\circ \sigma)\cap F\neq \varnothing$.
\end{prop}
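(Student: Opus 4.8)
The plan is to deduce Proposition \ref{sa2} from Proposition \ref{sa} by a routine coset argument, using that $\mathrm{Aut}_0(G)$ has finite index in $\mathrm{Aut}(G)$ by Lemma \ref{finite_index}. Let $F_0\subseteq G\setminus\lbrace 1\rbrace$ be the finite set furnished by Proposition \ref{sa}. First I would record a small clarifying observation, namely that $\mathrm{Aut}_0(G)$ is normal in $\mathrm{Aut}(G)$: if $\phi\sim\mathrm{id}_G$ and $\alpha\in\mathrm{Aut}(G)$, then for an arbitrary finite subgroup $H$ the hypothesis applied to the finite subgroup $\alpha^{-1}(H)$ gives an element $g'$ with $\phi_{\vert \alpha^{-1}(H)}=\mathrm{ad}(g')$, whence $(\alpha\phi\alpha^{-1})_{\vert H}=\mathrm{ad}(\alpha(g'))$, so $\alpha\phi\alpha^{-1}\sim\mathrm{id}_G$. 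Together with Lemma \ref{finite_index}, this lets me fix representatives $\sigma_1,\ldots,\sigma_N$ (with $N=[\mathrm{Aut}(G):\mathrm{Aut}_0(G)]$) for which $\mathrm{Aut}(G)=\bigcup_{i=1}^{N}\mathrm{Aut}_0(G)\,\sigma_i$.

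Next I would set $F=\sigma_1(F_0)\cup\cdots\cup\sigma_N(F_0)$. This is a finite set, and since each $\sigma_i$ is an automorphism and $F_0\subseteq G\setminus\lbrace 1\rbrace$, we have $F\subseteq G\setminus\lbrace 1\rbrace$, as required. Now let $\phi:G\rightarrow G'$ be a non-injective homomorphism. By Proposition \ref{sa} there exist $\sigma\in\mathrm{Aut}(G)$ and $f_0\in F_0$ with $\phi(\sigma(f_0))=1$. Writing $\sigma=\tau\sigma_i$ with $\tau\in\mathrm{Aut}_0(G)$ and $1\leq i\leq N$, and putting $f=\sigma_i(f_0)\in F$, I compute $\phi(\tau(f))=\phi(\tau(\sigma_i(f_0)))=\phi(\sigma(f_0))=1$. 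Hence $f\in\ker(\phi\circ\tau)\cap F$ with $\tau\in\mathrm{Aut}_0(G)$, which is exactly the desired conclusion.

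The argument is essentially mechanical, so there is no real obstacle; the only point requiring a little care is to arrange the bookkeeping so that the auxiliary automorphisms $\sigma_i$ are absorbed into the finite set $F$ rather than into the factor $\tau$ that must land in $\mathrm{Aut}_0(G)$. The right-coset decomposition $\mathrm{Aut}_0(G)\,\sigma_i$ used above achieves this directly; the normality observation of the first paragraph shows that left and right coset decompositions coincide, so the formulation $\mathrm{Aut}(G)=\sigma_1\circ\mathrm{Aut}_0(G)\cup\cdots\cup\sigma_N\circ\mathrm{Aut}_0(G)$ sketched before the statement leads to the same finite set $\sigma_1(F_0)\cup\cdots\cup\sigma_N(F_0)$.
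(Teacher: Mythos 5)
Your proof is correct and follows essentially the same route as the paper: decompose $\mathrm{Aut}(G)$ into finitely many cosets of $\mathrm{Aut}_0(G)$ (finite index by Lemma \ref{finite_index}) and enlarge the set $F$ from Proposition \ref{sa} by applying the coset representatives. Your extra observation that $\mathrm{Aut}_0(G)$ is normal in $\mathrm{Aut}(G)$ (or, equivalently, your use of right cosets $\mathrm{Aut}_0(G)\,\sigma_i$) is a nice touch that carefully justifies the bookkeeping the paper leaves to the reader with the phrase ``observe that Proposition \ref{sa} remains true\dots''.
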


\begin{rque}
The reason why we define this subgroup $\mathrm{Aut}_0(G)$ lies in the fact that $\phi\circ \sigma$ and $\phi$ are equivalent in the sense of Definition \ref{equiv} when $\sigma$ belongs to $\mathrm{Aut}_0(G)$. This observation will be very useful later on.
\end{rque}

\section{Co-Hopfian virtually free groups are rigid}\label{coHrigid}
\subsection{Preliminary lemmas}

\begin{lemme}\label{lemmecohopf}Let $H=\langle S \ \vert \ R \rangle$ be a group. Let $\sigma : A\rightarrow B$ be an isomorphism between two finite subgroups $A$ and $B$ of $H$. Suppose that $A$ and $B$ are conjugate in $H$. Then the HNN extension $G=H\ast_{\sigma}=\langle S,t \ \vert \ R, tat^{-1}=\sigma(a) \ \forall a\in A\rangle$ is not co-Hopfian.
\end{lemme}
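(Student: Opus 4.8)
The plan is to exploit the hypothesis that $A$ and $B$ are conjugate in $H$ in order to replace the stable letter $t$ by one that normalizes a \emph{single} finite subgroup, and then to manufacture an injective non-surjective endomorphism by raising this new stable letter to a suitable power. First I would fix $h\in H$ with $hAh^{-1}=B$ and set $w=h^{-1}t$. For every $a\in A$ one computes $waw^{-1}=h^{-1}\sigma(a)h$, and since $\sigma(a)\in B=hAh^{-1}$ this lies in $A$; thus $\tau:a\mapsto h^{-1}\sigma(a)h$ is an injective endomorphism of the finite group $A$, hence an automorphism. As $t=hw$ with $h\in H$, a Tietze transformation shows
\[
G \;\cong\; H\ast_\tau \;=\; \langle S, w \mid R,\ waw^{-1}=\tau(a)\ \forall a\in A\rangle,
\]
an HNN extension of $H$ whose two associated subgroups both equal $A$, glued by the automorphism $\tau$. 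Hence we may assume $B=A$ and $\sigma=\tau\in\mathrm{Aut}(A)$.

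Next, since $A$ is finite, $\mathrm{Aut}(A)$ is finite, so $\tau^{m}=\mathrm{id}_A$ for some integer $m\geq 1$. I would then define $\phi:G\rightarrow G$ by $\phi_{\vert H}=\mathrm{id}_H$ and $\phi(w)=w^{m+1}$. This respects the relations $R$ (as $\phi$ fixes $H$) and the HNN relation, because $w^{m+1}aw^{-(m+1)}=\tau^{m+1}(a)=\tau(a)$ using $\tau^{m}=\mathrm{id}$; hence $\phi$ is a well-defined endomorphism with image $\langle H, w^{m+1}\rangle$. Non-surjectivity is immediate from the exponent-sum homomorphism $\rho:G\rightarrow\mathbb{Z}$ sending $H$ to $0$ and $w$ to $1$, which is well defined since every defining relation has zero $w$-exponent sum: indeed $\rho(\langle H, w^{m+1}\rangle)\subseteq (m+1)\mathbb{Z}$ and $m+1\geq 2$, so $w\notin\mathrm{im}(\phi)$.

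The main work is to show that $\phi$ is injective, which I would do with Britton's lemma for the HNN extension $H\ast_\tau$ (stable letter $w$, both associated subgroups equal to $A$). I take a nontrivial element written in reduced form $g_0 w^{\varepsilon_1} g_1\cdots w^{\varepsilon_n} g_n$ with $g_i\in H$ and $\varepsilon_i=\pm 1$, reducedness meaning that whenever $\varepsilon_i=-\varepsilon_{i+1}$ one has $g_i\notin A$. Its image under $\phi$ is $g_0 w^{(m+1)\varepsilon_1} g_1\cdots w^{(m+1)\varepsilon_n} g_n$, and I would verify that no Britton pinch can occur: within a single block $w^{(m+1)\varepsilon_i}$ all exponents have the same sign, while at a junction between two blocks a pinch would require $\varepsilon_i=-\varepsilon_{i+1}$ together with $g_i\in A$, contradicting reducedness. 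Hence the image is itself reduced with $n$ nontrivial $w$-syllables, so it is trivial only when $n=0$ and $g_0=1$, giving injectivity. The delicate point, which I expect to require the most care, is exactly this normal-form bookkeeping: one must ensure that replacing each $w^{\pm 1}$ by $w^{\pm(m+1)}$ never opens a cancellable pinch that was absent before, which is guaranteed precisely because reducedness is phrased in terms of the single associated subgroup $A$. Combining these steps, $\phi$ is an injective endomorphism of $G$ that is not onto, so $G$ is not co-Hopfian.
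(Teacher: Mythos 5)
Your proof is correct and takes essentially the same approach as the paper: replace the stable letter so that $\sigma$ becomes an automorphism $\tau$ of $A$, define $\phi$ fixing $H$ and sending the stable letter to its $(m+1)$-st power with $\tau^m=\mathrm{id}_A$, and verify injectivity by Britton's lemma. The only (harmless) difference is that you certify non-surjectivity via the $w$-exponent-sum homomorphism onto $\mathbb{Z}$, where the paper deduces it from Britton's lemma as well.
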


\begin{proof}
Since $A=B^h$ for some $h\in H$, up to replacing $t$ by $ht$, we can assume that $\sigma$ is an automorphism of $A$. Let $m$ denote the order of $\mathrm{Aut}(A)$. We have $t^{m}at^{-m}=\sigma^m(a)=a$ for every $a\in A$, so we can define an endomorphism $\phi$ of $G$ by $\phi(h)=h$ if $h\in H$, and $\phi(t)=t^{m+1}$. A straightforward application of Britton's lemma shows that the endomorphism $\phi$ is injective, and that the stable letter $t$ has no preimage under $\phi$.\end{proof}

\begin{lemme}\label{unpetitlemme}Let $G$ be a co-Hopfian virtually free group. Let $\Delta$ be a reduced Stallings splitting of $G$. Let $k$ denote the least order of an edge group of $\Delta$. Denote by $\Delta_k$ the splitting of $G$ obtained from $\Delta$ by collapsing each edge $e$ such that $\vert G_e\vert > k$. Let $\phi$ be an endomorphism of $G$ such that $\phi\sim \mathrm{id}_G$. If $v$ is a vertex group of $\Delta_k$, then $\phi(G_v)$ is contained in a conjugate of $G_v$.\end{lemme}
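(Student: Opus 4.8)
The plan is to transfer the whole question to the Bass--Serre tree $T_k$ of $\Delta_k$ and to exploit the fact that, by construction, \emph{every} edge group of $\Delta_k$ has order exactly $k$. The decisive elementary observation is then: any finite subgroup $D\le G$ with $\abs{D}>k$ fixes a \emph{unique} vertex of $T_k$. Indeed, $D$ is elliptic, and if it fixed two vertices it would fix the edge on the geodesic between them, forcing $D\le G_e$ and hence $\abs{D}\le k$, a contradiction. I will use this uniqueness repeatedly.

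First I would describe $G_v$ geometrically. The vertex $v$ of $T_k$ is the image of a subtree $Y\subseteq T$ spanned by the collapsed edges, i.e.\ the edges of order $>k$, and $G_v=\mathrm{Stab}_G(Y)$. If $Y$ is reduced to a single vertex $y$, then $G_v=G_y$ is finite, and since $\phi\sim\mathrm{id}_G$ the restriction $\phi_{\vert G_v}$ equals $\mathrm{ad}(g)$ for some $g\in G$, so $\phi(G_v)$ is literally a conjugate of $G_v$ and we are done. So I would then assume $Y$ has at least one edge; as $Y$ is connected, every vertex $y\in Y$ is incident to an edge of $Y$, whose group has order $>k$, whence $\abs{G_y}>k$ for all $y\in Y$.

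The core of the argument is as follows. For each $y\in Y$, the relation $\phi\sim\mathrm{id}_G$ provides $g_y\in G$ with $\phi_{\vert G_y}=\mathrm{ad}(g_y)$, so $\phi(G_y)=g_yG_yg_y^{-1}$ has order $>k$ and therefore fixes a unique vertex $\Phi(y)\in T_k$. Since $G_y\le G_v$ fixes $v$ and $\abs{G_y}>k$, the unique vertex fixed by $G_y$ is $v$, so explicitly $\Phi(y)=g_y\cdot v$, which already lies in the orbit of $v$. I claim $\Phi$ is constant: if $[y,y']$ is an edge of $Y$ with group $C$, then $\abs{C}>k$, so $\phi(C)$ has a unique fixed vertex; but $\phi(C)\le\phi(G_y)$ fixes $\Phi(y)$ while $\phi(C)\le\phi(G_{y'})$ fixes $\Phi(y')$, forcing $\Phi(y)=\Phi(y')$, and connectedness of $Y$ gives $\Phi\equiv p$ with $p=g\cdot v$ in the orbit of $v$. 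Finally, to see that all of $\phi(G_v)$ fixes $p$ I would invoke equivariance: for $g\in G_v$ one has $gY=Y$ and $G_{gy}=gG_yg^{-1}$, hence $\Phi(gy)=\phi(g)\cdot\Phi(y)$; constancy of $\Phi$ then reads $p=\phi(g)\cdot p$. Thus $\phi(G_v)$ fixes $p$, i.e.\ $\phi(G_v)\le\mathrm{Stab}_G(p)=gG_vg^{-1}$.

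The step I expect to require the most care is this last one: controlling $\phi$ on \emph{all} of $G_v$ rather than only on its finite vertex subgroups $G_y$, because $G_v$ may have an HNN part and need not be generated by these finite subgroups. The equivariance of the assignment $y\mapsto\Phi(y)$ is what circumvents any explicit bookkeeping of stable letters, and the same device simultaneously guarantees that the common fixed vertex $p$ lies in the orbit of $v$, which is exactly what upgrades ``$\phi(G_v)$ is contained in a vertex group'' to the desired ``$\phi(G_v)$ is contained in a conjugate of $G_v$''.
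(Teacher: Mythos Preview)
Your proof is correct, and it takes a genuinely different route from the paper's. The paper argues in two steps: first, it claims that $\phi(G_v)$ is elliptic in $T_k$ because $G_v$ does not split over a subgroup of order $k$ (an appeal to the accessibility of $G_v$ relative to subgroups of order $\le k$, left somewhat implicit); second, it uses the co-Hopfian hypothesis, via Lemma~\ref{lemmecohopf}, to rule out the possibility that $G_v$ has no finite subgroup of order $>k$, and then uses a single such subgroup $F$ to pin the fixed vertex $v'$ to the orbit of $v$.

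Your argument instead builds the fixed vertex from below: you use \emph{all} the finite vertex groups $G_y$ for $y\in Y$ to define $\Phi$, glue along the edges of $Y$ to get constancy, and then invoke $\phi$-equivariance of $\Phi$ to push the conclusion from the $G_y$'s to the whole of $G_v$. This bypasses the abstract ellipticity step entirely and, notably, never uses the co-Hopfian hypothesis; your lemma therefore holds for any finitely generated virtually free $G$. What the paper's approach buys is brevity once one accepts the splitting principle, and it singles out a single witness $F$; what your approach buys is a completely elementary, self-contained argument that also yields a mild strengthening of the statement.
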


\begin{proof}
Since $G_v$ does not split over a subgroup of order $k$ (by definition of $\Delta_k$), $\phi(G_v)$ fixes a vertex $v'$ in the Bass-Serre tree $T$ of $\Delta_k$. It remains to prove that $v'$ is a translate of the vertex $v$. 

First, observe that $G_v$ has a finite subgroup of order $>k$. Otherwise $G_v$ is a finite group of order $k$, and the underlying graph of $\Delta$ is a rose, and hence $\Delta$ contains the following subgraph:
\begin{center}
\begin{tikzpicture}[scale=1]
\node[draw,circle, inner sep=2.7pt, fill, label=left:{$G_e$}] (A1) at (1.25,0) {};
\node[draw,circle, inner sep=15.7pt, label=right:{$G_e$}] (A1) at (2,0) {};
\end{tikzpicture}
\end{center}
By collapsing all but one of the edges of $\Delta$, we get a splitting of $G$ as in Lemma \ref{lemmecohopf}, which contradicts the assumption that $G$ is co-Hopfian.

Now, let $F$ be a finite subgroup of $G_v$ of order $> k$. The vertex $v$ is the unique vertex fixed by $F$ in the Bass-Serre tree $T$ of $\Delta_k$, because $\vert F\vert >k$. Moreover $\phi(F)=F^g$, thus $g  v$ is the unique vertex of $T$ fixed by $\phi(F)$. On the other hand, $\phi(G_v)$ fixes the vertex $v'$, so $\phi(F)$ fixes $v'$ as well. Since $\phi(F)$ fixes a unique point, it follows that $v'=g v$. As a consequence, $\phi(G_v)$ is contained in $G_v^g$.\end{proof}

\subsection{Characterization of co-Hopfian virtually free groups}

Let $G$ be a virtually free group and let $\Delta$ be a Stalling-Dunwoody splitting of $G$. We denote by $\Delta_k$ the splitting of $G$ obtained from $\Delta$ by collapsing each edge whose stabilizer has order $>k$. We denote by $T_k$ the Bass-Serre tree of $\Delta_k$. In his PhD thesis \cite{Moi13}, Moioli gave a complete characterization of virtually free groups that are co-Hopfian. Here below are two versions of this characterization: the first one is geometric (see Theorem \ref{Moioli1} below), and the second one is a purely group theoretical criterion expressed in terms of the normalizers of the edge groups (see Theorem \ref{Moioli2} below).

\begin{te}[Moioli]\label{Moioli1}Let $G$ be a virtually free group, and let $\Delta$ be a Stallings splitting of $G$. Then $G$ is co-Hopfian if and only if the following condition holds: for every integer $k$, and for every edge $e$ of $T_k$ such that $\vert G_e\vert =k$, the cylinder $Y_e$ of $e$ has bounded diameter in $T_k$.\end{te}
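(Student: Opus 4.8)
The plan is to prove the two implications separately, each through its contrapositive, so that the substantive content becomes the single equivalence ``some cylinder $Y_e$ with $|G_e|=k$ is unbounded in $T_k$'' $\Longleftrightarrow$ ``$G$ is not co-Hopfian''. The engine on one side is Lemma~\ref{lemmecohopf}: it will suffice to produce, from an unbounded cylinder, a splitting of $G$ as an HNN extension over a finite group whose two associated subgroups are conjugate in the vertex group. On the other side I will need the converse mechanism, namely that a proper injective endomorphism \emph{forces} such an HNN splitting; this is where the accessibility results (Propositions~\ref{compacité} and~\ref{sa}) enter.

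For the implication \emph{unbounded cylinder $\Rightarrow$ not co-Hopfian}, suppose $Y_e$ has unbounded diameter in $T_k$ with $|G_e|=k$. By Lemma~\ref{normalisateur} the global stabilizer of $Y_e$ is $N_G(G_e)$, and the contrapositive of Lemma~\ref{petitlemme} shows $N_G(G_e)$ is not elliptic in $T_k$. Since $G$ acts cocompactly on $T_k$ and every edge of the cylinder has stabilizer \emph{exactly} $G_e$, any element of $G$ identifying two edges of $Y_e$ normalizes $G_e$; hence $N_G(G_e)$ acts cocompactly on $Y_e$, with $G_e$ contained in the kernel of this action. First I would deduce that $N_G(G_e)/G_e$ is infinite and acts on the unbounded tree $Y_e$ without global fixed point, so that $N_G(G_e)$ contains an element $t$ acting loxodromically on $Y_e$; its axis lies in $Y_e$, whence every axis edge is stabilized by $G_e$ and $t$ normalizes $G_e$. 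Projecting the axis to the finite quotient graph $T_k/G$ yields a non-trivial loop, and collapsing all edges of $T_k$ outside the orbit of one edge of this loop exhibits $G$ as an HNN extension over a finite group. The crucial simplification is that, because all edges of the cylinder carry the \emph{same} stabilizer $G_e$ rather than merely conjugate ones, the two associated subgroups are literally equal to $G_e$; they are therefore conjugate in the vertex group, the identification being $\mathrm{ad}(t)|_{G_e}\in\mathrm{Aut}(G_e)$, and Lemma~\ref{lemmecohopf} applies.

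For the converse \emph{not co-Hopfian $\Rightarrow$ some cylinder unbounded}, I would start from an injective, non-surjective endomorphism $\phi$ of $G$ and run the equivariant-map construction of Proposition~\ref{sa}, now with $G'=G$ and the target tree pulled back through $\phi$. Because $\phi$ is injective, none of the collapse-or-fold moves that drive the proof of Proposition~\ref{sa} can occur (those moves detect either non-injectivity on a vertex group or an element killed by $\phi$), so the $\phi$-equivariant map $T\to T$ is an isometric embedding onto a proper $\phi(G)$-invariant subtree. The proper inclusion $\phi(G)\subsetneq G$, combined with cocompactness and the finiteness of the Stallings deformation space up to $\mathrm{Aut}(G)$ (Proposition~\ref{compacité}), should force a properly ascending configuration over a finite edge stabilizer, i.e.\ an element normalizing some edge group and acting loxodromically, which by Lemma~\ref{normalisateur} and the contrapositive of Lemma~\ref{petitlemme} is exactly an unbounded cylinder. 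Lemma~\ref{unpetitlemme} is the tool that controls how $\phi$ moves the vertex groups of $\Delta_k$ and rules out surjectivity failing in the absence of such a normalizing loxodromic.

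I expect the main obstacle to be precisely this converse direction: converting ``$\phi$ injective but not onto'' into a genuine HNN splitting over a finite group with conjugate associated subgroups. Producing the loxodromic element and its cylinder in the first direction is essentially bookkeeping once one records that cylinder edges share a single stabilizer, but showing that a proper self-embedding \emph{must} arise from such an ascending HNN structure is the real heart of Moioli's theorem, and it is here that cocompactness and accessibility of the deformation space have to be exploited in an essential way.
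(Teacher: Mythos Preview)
This theorem is not proved in the paper. It is stated with attribution to Moioli and cited from his thesis \cite{Moi13}; the paper uses it (together with Theorem~\ref{Moioli2}) as a black box, most notably inside the proof of Lemma~\ref{lemmeperin} and in Remark~\ref{crucial}. There is therefore no proof in the paper against which to compare your proposal.

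On the merits of the proposal itself: the direction ``unbounded cylinder $\Rightarrow$ not co-Hopfian'' is essentially sound. Cocompactness of $N_G(G_e)$ on the unbounded subtree $Y_e$ does force a loxodromic $t\in N_G(G_e)$, and this is the right mechanism for producing an ascending HNN situation to which Lemma~\ref{lemmecohopf} applies. You should, however, justify more carefully that collapsing yields an HNN extension rather than an amalgam, and that the two associated copies of $G_e$ are conjugate \emph{in the vertex group}; the loxodromic $t$ does supply this, but it takes a line or two of argument, not just the observation that all cylinder edges share the stabilizer $G_e$.

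The converse direction has a genuine gap. Your claim that ``none of the collapse-or-fold moves of Proposition~\ref{sa} can occur because $\phi$ is injective'' is not justified: those moves reflect how the $\phi$-equivariant map $f$ rearranges vertex groups in the tree, and injectivity of $\phi$ does not by itself force $f$ to be an isometric embedding. (In Proposition~\ref{sa} non-injectivity is used to show $f$ is not an isometry, but the converse implication does not hold.) Moreover, you invoke Lemma~\ref{unpetitlemme}, whose hypothesis is precisely that $G$ is co-Hopfian, so it cannot be used here without circularity. Extracting, from a proper self-embedding, an element that normalizes some edge group and acts hyperbolically requires a genuinely different argument from anything in this paper; Propositions~\ref{compacité} and~\ref{sa} are tailored to non-injective morphisms and do not do this job.
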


\begin{te}[Moioli]\label{Moioli2}Let $G$ be a virtually free group, and let $\Delta$ be a Stallings splitting of $G$. For every edge $e$ of $\Delta$, let $\Delta_e$ be the graph of groups obtained by collapsing each edge different from $e$ in $\Delta$. Then $G$ is co-Hopfian if and only if, for every edge $e$ of $\Delta$, the following conditions hold.
\begin{itemize}
\item[$\bullet$]If $\Delta_e$ is a splitting of the form $A\ast_C B$, then $N_A(C)=C$ or $N_B(C)=C$.
\item[$\bullet$]If $\Delta_e$ is a splitting of the form $A\ast_{\alpha}$ where $\alpha : C \rightarrow C'$ is an isomorphism between two finite subgroups of $A$, then $C$ and $C'$ are non-conjugate in $A$, and $N_A(C)=C$ or $N_A(C')=C'$.
\end{itemize}
\end{te}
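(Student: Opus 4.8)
The plan is to derive Theorem \ref{Moioli2} from the geometric criterion of Theorem \ref{Moioli1}, by showing, edge by edge, that the stated normalizer conditions are exactly equivalent to the boundedness of the corresponding cylinders. Fix an edge $e$ of $\Delta$, write $C=G_e$ and $k=|C|$, and recall that the cylinder $Y_e$ is the subtree $\mathrm{Fix}(C)$ of $T_k$, with global stabilizer $N_G(C)$ by Lemma \ref{normalisateur}. The starting point is the equivalence ``$Y_e$ has bounded diameter in $T_k$'' $\iff$ ``$N_G(C)$ is elliptic in $T_k$'': one direction is Lemma \ref{petitlemme}, and for the converse I would use that $T_k$ is locally finite and $G$ acts cocompactly, so that an $N_G(C)$-invariant subtree of unbounded diameter must carry a hyperbolic element of $N_G(C)$. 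I will also use repeatedly the elementary fact that if $T''$ is a collapse of $T'$, then an element hyperbolic in $T''$ is hyperbolic in $T'$ (equivalently, ellipticity is preserved under collapse).

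I would first treat the single-edge splitting $\Delta_e$ and its Bass--Serre tree $T_e$. In the amalgam case $G=A\ast_C B$, a coset computation shows that the cylinder of $e$ branches $[N_A(C):C]$ times at each $A$-vertex and $[N_B(C):C]$ times at each $B$-vertex, and that the induced action identifies $N_G(C)=N_A(C)\ast_C N_B(C)$; this group is elliptic in $T_e$ precisely when the amalgam degenerates, i.e.\ when $N_A(C)=C$ or $N_B(C)=C$. In the HNN case $G=A\ast_\alpha$ with $\alpha\colon C\to C'$, if $C$ and $C'$ are conjugate in $A$ then Lemma \ref{lemmecohopf} already exhibits an injective non-surjective endomorphism, so $G$ is not co-Hopfian; assuming non-conjugacy, the same coset bookkeeping shows that the cylinder alternates between vertices of branching $[N_A(C):C]$ and vertices of branching $[N_A(C'):C']$, so it is bounded exactly when $N_A(C)=C$ or $N_A(C')=C'$. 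Thus in both cases the conditions of Theorem \ref{Moioli2} at $e$ are equivalent to the boundedness of the cylinder of $e$ in the one-edge tree $T_e$.

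It remains to bridge the one-edge tree $T_e$ and the tree $T_k$ that appears in Theorem \ref{Moioli1}. For the implication ``conditions fail $\Rightarrow$ not co-Hopfian'', suppose (outside the conjugacy case, handled above) that both relevant normalizers are strictly larger than $C$ at some edge; then $N_G(C)$ is non-elliptic in $T_e$, so it contains an element hyperbolic in $T_e$, which is therefore hyperbolic in $T_k$ and, normalizing $C$, has its axis inside $Y_e=\mathrm{Fix}(C)$. Hence $Y_e$ is unbounded in $T_k$ and $G$ is not co-Hopfian by Theorem \ref{Moioli1}. Conversely, if $G$ is not co-Hopfian, Theorem \ref{Moioli1} gives some $k$ and some edge with unbounded cylinder $Y_e$ in $T_k$; since $N_G(C)$ acts cocompactly on the unbounded tree $Y_e$, it has a hyperbolic element $h$, and I would pick an edge $\bar\epsilon$ on its axis, a translate of a $\Delta$-edge $\bar e$ with $G_{\bar e}=C$. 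Both endpoints of $\bar\epsilon$ then have cylinder-degree $\geq 2$ in $T_k$, so their $T_k$-vertex groups $P,Q$ satisfy $N_P(C)\supsetneq C$ and $N_Q(C)\supsetneq C$; passing to $\Delta_{\bar e}$, whose vertex groups contain $P$ and $Q$, the monotonicity $N_A(C)\supseteq N_P(C)$ (and its analogues) forces the corresponding normalizer conditions to fail at $\bar e$.

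The main obstacle will be this last reconciliation: the normalizers in Theorem \ref{Moioli2} are computed in the large vertex groups of $\Delta_{\bar e}$, whereas boundedness in Theorem \ref{Moioli1} lives in $T_k$, where the vertex groups are smaller. The device that resolves it is the inclusion of $T_k$-vertex groups into $\Delta_{\bar e}$-vertex groups together with the choice of $\bar e$ along a translation axis, guaranteeing branching on both sides in $T_k$ and hence a fortiori in $\Delta_{\bar e}$. The other delicate point is the HNN bookkeeping: one must verify that, under non-conjugacy of $C$ and $C'$, the axis of $h$ meets vertices whose branching is governed by $N_A(C)$ and by $N_A(C')$ alternately, so that unboundedness yields $N_A(C)\neq C$ \emph{and} $N_A(C')\neq C'$, exactly negating the second bullet. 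Both points I expect to follow from careful Bass--Serre normal-form arguments rather than any new idea.
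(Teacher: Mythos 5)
The paper itself gives no proof of Theorem \ref{Moioli2}: it is quoted from Moioli's thesis \cite{Moi13}. So I can only assess your argument on its own terms. Much of it is sound: the equivalence between boundedness of the cylinder $Y_e$ and ellipticity of $N_G(C)$ is correct (though your justification via ``$T_k$ is locally finite'' is off --- $T_k$ is not locally finite when vertex groups of $\Delta_k$ are infinite; what you actually need, and what is true, is that $N_G(C)$ acts cocompactly on $Y_e$, because two cylinder edges in the same $G$-orbit are automatically in the same $N_G(C)$-orbit). The one-edge analysis, the identification $N_G(C)=N_A(C)\ast_C N_B(C)$, and the whole direction ``conditions fail $\Rightarrow$ not co-Hopfian'' (including the use of Lemma \ref{lemmecohopf} in the conjugate HNN case and the fact that hyperbolicity in the collapsed tree $T_{\bar e}$ implies hyperbolicity in $T_k$) all go through.

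The converse direction, however, contains a genuine gap. The step ``both endpoints of $\bar\epsilon$ have cylinder-degree $\geq 2$ in $T_k$, so their $T_k$-vertex groups $P,Q$ satisfy $N_P(C)\supsetneq C$ and $N_Q(C)\supsetneq C$'' is false. Two distinct cylinder edges incident to the same vertex $v$ yield an element of $G_v$ normalizing $C$ only if they lie in the same $G_v$-orbit; they need not, since they can be translates of lifts of two \emph{different} edges of $\Delta_k$ (or of the two ends of a single loop edge) whose stabilizers happen to coincide. The simplest counterexample is $G=\langle c\rangle\times\langle t\rangle\simeq (\mathbb{Z}/2\mathbb{Z})\times\mathbb{Z}$, with $\Delta$ a single loop whose vertex and edge groups equal $C=\langle c\rangle$: then $T_k$ is a line, the cylinder of any edge is the whole line (unbounded, consistently with $G$ not being co-Hopfian), every vertex has cylinder-degree $2$, and yet $N_{G_v}(C)=C$ at every vertex. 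In such situations the conditions of Theorem \ref{Moioli2} fail through the \emph{conjugacy} clause ($C$ and $C'$ conjugate in $A$), not through the normalizer clause, and your monotonicity device $N_A(C)\supseteq N_P(C)$ says nothing about conjugacy. Your sketch only invokes conjugacy in the easy direction, and your announced ``HNN bookkeeping'' is carried out under the standing assumption that $C$ and $C'$ are non-conjugate, so it cannot repair this. What is actually needed, when the two axis edges at a vertex lie in different $G_v$-orbits, is an argument showing that the two inclusions of $C$ into the vertex group of the corresponding one-edge splitting $\Delta_{\bar e}=A\ast_\alpha$ become conjugate inside $A$ --- note that this conjugacy is typically created only after collapsing the rest of $\Delta$ into $A$, so it cannot be read off locally in $T_k$; this missing reconciliation is precisely the hard half of the theorem, and the proposal does not supply it.
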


\begin{rque}\label{crucial}A subgroup of a co-Hopfian group is not co-Hopfian in general. However, it follows from the previous theorem that, if $\Lambda$ is a subgraph of $\Delta$ (with the same notations as above), then the fundamental group of $\Lambda$ is co-Hopfian.\end{rque}

\subsection{Co-Hopfian virtually free groups are rigid}
 In this subsection, we shall prove that co-Hopfian virtually free groups are rigid in the sens of Definition \ref{rigid}. In other words, we shall prove that every endomorphism of a co-Hopfian virtually free group $G$ that is equivalent to $\mathrm{id}_G$ (i.e.\ that coincides with an inner automorphism on each finite subgroup of $G$) is an automorphism of $G$. First, let us prove a lemma.
 
\begin{lemme}\label{lemmeperin}Let $G$ be a co-Hopfian virtually free group. Let $\Delta$ be a Stallings splitting of $G$. Let $k$ denote the least order of an edge group of $\Delta$. Denote by $\Delta_k$ the splitting of $G$ obtained from $\Delta$ by collapsing each edge $e$ such that $\vert G_e\vert > k$. Let $\phi$ be an endomorphism of $G$ satisfying the following two properties:
\begin{enumerate}
\item $\phi\sim \mathrm{id}_G$;
\item for each vertex group $v$ of $\Delta_k$, $\phi$ is injective on $G_v$.
\end{enumerate}
Then $\phi$ is an automorphism.
\end{lemme}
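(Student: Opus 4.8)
The goal is to upgrade an endomorphism $\phi$ that is locally injective on the $\Delta_k$-vertex groups and equivalent to the identity into a genuine automorphism. Since $G$ is co-Hopfian, it suffices to prove that $\phi$ is \emph{injective}; co-Hopficity then promotes injectivity to surjectivity automatically. So the entire task reduces to showing that $\ker(\phi)$ is trivial.

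The natural strategy is to exploit the action of $G$ on the Bass-Serre tree $T_k$ of $\Delta_k$ and build a $\phi$-equivariant map, exactly in the spirit of the proof of Proposition \ref{sa}. First I would use hypothesis (2) together with Lemma \ref{unpetitlemme}: for each vertex group $G_v$ of $\Delta_k$, the lemma gives that $\phi(G_v)$ lies in a conjugate $G_v^{g_v}$, and by hypothesis (2) the restriction $\phi_{\vert G_v}$ is injective, so after composing with $\mathrm{ad}(g_v^{-1})$ the map $\phi$ sends $G_v$ isomorphically \emph{onto} $G_v$ (here one uses that $G_v$ is finite, so an injective self-map is bijective, or more carefully that $\phi(G_v)$ and $G_v^{g_v}$ have the same order). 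This lets me define a $\phi$-equivariant map $f : T_k \rightarrow T_k$ sending the vertex $v$ to $g_v v$, which is a vertex fixed by $\phi(G_v)$, and extending equivariantly; because $\phi$ restricts to an isomorphism on each vertex stabilizer, this map has a chance of being an isometry, which is precisely what forces injectivity.

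The crux is to show $f$ is actually an isometric embedding (equivalently that no edge of $T_k$ gets collapsed or folded), since by the argument in Proposition \ref{sa} a non-isometric $\phi$-equivariant map is exactly what detects a nontrivial kernel element. Here the co-Hopfian hypothesis enters through Moioli's criterion (Theorem \ref{Moioli2}) and the tree-of-cylinders analysis of Section \ref{tree}: the condition $\phi\sim\mathrm{id}_G$ means $\phi$ agrees with conjugation on every finite subgroup, so $\phi$ preserves the conjugacy class of each edge group $G_e$, and hence respects the cylinder structure on $T_k$. The boundedness of cylinders guaranteed by Theorem \ref{Moioli1}, combined with Lemma \ref{petitlemme} (which makes $N_G(G_e)$ elliptic), should prevent any folding: a fold or collapse at an edge $e$ would produce an element of $N_G(G_e)$ strictly larger than the edge group acting hyperbolically, contradicting either ellipticity of the normalizer or the co-Hopfian edge-condition ($N_A(C)=C$ on one side) from Theorem \ref{Moioli2}. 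Once $f$ is an isometry, $\phi$ is injective on the whole of $G$: any $g\in\ker\phi$ would act trivially on $f(T_k)$, but $f$ being an isometric $\phi$-equivariant map forces $g$ to fix $T_k$ pointwise, whence $g$ lies in every edge group intersection and is trivial.

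\textbf{Main obstacle.} I expect the hard part to be the second paragraph's transition — ruling out folds and collapses of $f$ — because this is where the global co-Hopfian hypothesis must be converted into a local, edge-by-edge statement. Unlike Proposition \ref{sa}, where one freely performs folds and lands in the deformation space, here one must show such moves are \emph{impossible}; the delicate point is that a fold identifying two edges $e, e'$ emanating from $v$ would exhibit $\langle G_w, G_{w'}\rangle$ or a normalizer $N_{G_v}(G_e)$ strictly containing $G_e$, and one must trace through precisely which case of Moioli's criterion is violated, keeping careful track of the passage between the full splitting $\Delta$ and the collapsed splitting $\Delta_k$ (where only the minimal-order edge groups survive). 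The compatibility of the equivalence $\phi\sim\mathrm{id}_G$ with this edge combinatorics — ensuring $\phi$ cannot merge distinct cylinders — is the technical heart of the argument.
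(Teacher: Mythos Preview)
Your overall architecture is right --- build a $\phi$-equivariant tree map, rule out folding, deduce injectivity, then invoke co-Hopficity --- and it matches the paper. But two points need correction.

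\textbf{The vertex groups of $\Delta_k$ are not finite.} Your parenthetical ``here one uses that $G_v$ is finite'' is false: $\Delta_k$ is obtained from the Stallings splitting $\Delta$ by collapsing all edges of order $>k$, so each $G_v$ is the fundamental group of a subgraph of $\Delta$ and is in general an infinite virtually free group. The way to get $\phi(G_v)=G_v^{g_v}$ (rather than mere containment) is Remark~\ref{crucial}: each such $G_v$ is itself co-Hopfian, so the injective map $\mathrm{ad}(g_v^{-1})\circ\phi_{|G_v}:G_v\to G_v$ is onto. This is exactly what the paper does, and without it you cannot conclude $\phi(G_v)=G_{f(v)}$.

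\textbf{Work on $T_c$, not on $T_k$.} Your plan is to show that $f:T_k\to T_k$ is an isometric embedding, but this is the wrong target: $f$ sends an edge $e$ to the geodesic $[f(v),f(w)]$, which lies entirely in the cylinder of $\phi(G_e)$ but may well have length $>1$ (cylinders in $T_k$ can have diameter $>1$), so $f$ need not be an isometry even when $\phi$ is injective. The paper's key move is to pass to the induced map $f_c:T_c\to T_c$ on the tree of cylinders. Because $T_c$ is bipartite (vertices in $V_0$ versus cylinders in $V_1$) and $f$ carries cylinders to cylinders, the map $f_c$ sends edges to edges, and one only has to rule out \emph{folds}. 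The two cases are: (i) the pivot $v$ lies in $V_0$ --- then a fold would force $\phi(G_e)=\phi(G_{e'})$ with $G_e\neq G_{e'}\subset G_v$, contradicting injectivity on $G_v$; (ii) the pivot is a cylinder $Y_e$ --- here one uses Theorem~\ref{Moioli1} (bounded cylinders) and Lemma~\ref{petitlemme} to get $N_G(G_e)$ elliptic in $T_k$, hence $\phi$ is injective on $N_G(G_e)$; a fold then produces $g\notin N_G(G_e)$ with $\phi(g)=1$, and one finds two distinct order-$k$ subgroups $G_e,\,gG_eg^{-1}\subset G_{w'}$ with the same $\phi$-image, contradicting injectivity on $G_{w'}$. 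Your sketch invokes Theorem~\ref{Moioli2} and talks about producing large normalizers, but that criterion is not used here; the argument runs entirely through Theorem~\ref{Moioli1} and the injectivity hypothesis on vertex groups.
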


\begin{proof}
Let $T_k$ denote the Bass-Serre tree of $\Delta_k$, and let $T_c$ be its tree of cylinders. Recall that the tree $T_c$ is bipartite: its set of vertices is $V_0(T_c)\sqcup V_1(T_c)$ where $V_0(T_c)$ is the set of vertices $v$ of $T_k$ that belong to at least two cylinders and $V_1(T_c)$ is the set of cylinders $Y_e$ of $T_k$. We refer the reader to subsection \ref{tree} for the exact definition of $T_c$.

As a first step, let us define a $\phi$-equivariant map $f:T_k\rightarrow T_k$. Let $v_1,\ldots ,v_n$ be some representatives of the orbits of vertices of $T_k$. By Lemma \ref{unpetitlemme}, for every $1\leq i\leq n$, there exists an element $g_i\in G$ such that $\phi(G_{v_i})\subset G_{v_i}^{g_i}$. By Remark \ref{crucial}, $G_{v_i}$ is co-Hopfian (as every vertex group of $\Delta_k$ corresponds to a subgraph of $\Delta$), and by assumption $\phi$ is injective on $G_{v_i}$, and hence $\phi(G_{v_i})= G_{v_i}^{g_i}$. Set $f(v_i)=g_i v_i$. Then, we define $f$ on every vertex of $T_k$ by $\phi$-equivariance, so that $\phi(G_v)=G_{f(v)}$ for every vertex $v$ of $T_k$. Next, we define $f$ on the edges of $T_k$ in the following way: if $e$ is an edge of $T_k$, with endpoints $v$ and $w$, there exists a unique path $e'$ from $f(v)$ to $f(w)$ in $T_k$, and we let $f(e)=e'$.

Then, the map $f$ induces a $\phi$-equivariant map $f_c : T_c \rightarrow T_c$. Indeed, for each cylinder $Y_e=\mathrm{Fix}(G_e)\subset T_k$, the image $f(Y_e)$ is contained in $\mathrm{Fix}(\varphi(G_e))$ of $T_k$, which is a cylinder since $\varphi(G_e)$ is conjugate to $G_e$. If $v\in T_k$ belongs to two cylinders, so does $f(v)$. This allows us to define $f_c$ on vertices of $T_c$, by sending $v\in V_0(T_c)$ to $f(v)\in V_0(T_c)$ and $Y\in V_1(T_c)$ to $f(Y)\in V_1(T_c)$. If $(v,Y)$ is an edge of $T_c$, then $f_c(v)$ and $f_c(Y)$ are adjacent in $T_c$.

We shall prove that $f_c$ does not fold any pair of edges and, therefore, that $f_c$ is injective. Assume towards a contradiction that there exist a vertex $v$ of $T_c$, and two distinct vertices $w$ and $w'$ adjacent to $v$ such that $f_c(w)=f_c(w')$. 

First, assume that $v$ is not a cylinder. Since $T_c$ is bipartite, $w$ and $w'$ are two cylinders, associated with two edges $e$ and $e'$ of $T_k$. Since $f_c(w)=f_c(w')$, we have $\phi(G_{e})=\phi(G_{e'})$ by definition of $f_c$. But $\phi$ is injective on $G_v$ by assumption, and $G_{e},G_{e'}$ are two distinct subgroups of $G_v$ (by definition of a cylinder). This is a contradiction.

Now, assume that $v=Y_{e}$ is a cylinder. Hence $w$ and $w'$ are two vertices of $T_k$. Since $f_c(w)=f_c(w')$, we have $f(w)=f(w')$. By definition of the map $f$, there exists an element $g\in G$ such that $w'=g w$. We have $G_{w'}=gG_wg^{-1}$ and thus $\phi(G_{w'})=\phi(g)\phi(G_w)\phi(g)^{-1}$. But $\phi(G_{w'})=G_{f(w')}=G_{f(w)}=\phi(G_w)$, and therefore $\phi(g)$ belongs to $\phi(G_w)$, so one can assume that $\phi(g)=1$ up to multiplying $g$ by an element of $G_w$. Now, observe that $\phi$ is injective on $G_v=N_G(G_e)$: indeed, by Theorem \ref{Moioli1}, $Y_e$ has bounded diameter in $T_k$, and hence $N_G(G_e)$ is elliptic in $T_k$ by Lemma \ref{petitlemme}; it follows that $\phi$ is injective on $N_G(G_e)$, as $\phi$ is injective on the vertex groups of $T_k$ by assumption. Therefore $g$ does not belong to $G_v=N_G(G_e)$ since $\phi(g)=1$. Then observe that $G_{e}$ is contained in $G_w$ and in $G_{w'}$, and that $gG_{e}g^{-1}$ is contained in $gG_wg^{-1}=G_{w'}$. The edge groups $G_{e}$ and $gG_{e}g^{-1}$ are distinct since $g$ does not lie in $N_G(G_{e})$, but $\phi(G_{e})=\phi(gG_{e}g^{-1})$ since $\phi(g)=1$. This contradicts the injectivity of $\phi$ on $G_{w'}$.

Hence, $f_c$ is injective. It follows that $\phi$ is injective. Indeed, let $g$ be an element of $G$ such that $\phi(g)=1$. Then $f_c(g v)=f_c(v)$ for each vertex $v$ of $T_c$, so $g v=v$ for each vertex $v$ of $T_c$. But $\phi$ is injective on vertex groups of $T_c$, so $g=1$.

Last, $G$ being co-Hopfian, $\phi$ is an automorphism of $G$.\end{proof}

\begin{prop}\label{theorigid}Let $G$ be a co-Hopfian virtually free group. Then $G$ is rigid in the sense of Definition \ref{rigid}: every endomorphism $\phi$ of $G$ such that $\phi\sim \mathrm{id}_G$ (i.e.\ $\phi$ coincides with a conjugation on every finite subgroup of $G$) is an automorphism.\end{prop}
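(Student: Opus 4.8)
The plan is to show that any endomorphism $\phi$ of $G$ with $\phi\sim\mathrm{id}_G$ is injective; since $G$ is co-Hopfian, it is then automatically an automorphism. Fix a reduced Stallings splitting $\Delta$ of $G$, let $k$ be the least order of an edge group of $\Delta$, and form $\Delta_k$ as above, with Bass-Serre tree $T_k$; note that every edge of $\Delta_k$ has stabilizer of order exactly $k$. By Lemma \ref{lemmeperin} it suffices to prove that $\phi$ is injective on each vertex group $G_v$ of $\Delta_k$. I would establish this by induction on the number of edges of $\Delta$.

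Fix a vertex $v$ of $T_k$. By Lemma \ref{unpetitlemme} there is $g_v\in G$ with $\phi(G_v)\subseteq g_vG_vg_v^{-1}$, so $\psi_v:=\mathrm{ad}(g_v^{-1})\circ\phi|_{G_v}$ is an endomorphism of $G_v$ with the same kernel as $\phi|_{G_v}$; thus it is enough to show that $\psi_v$ is injective. If $G_v$ is finite, then $\phi|_{G_v}$ coincides with a conjugation because $\phi\sim\mathrm{id}_G$, hence is injective, and we are done (this is the base case). Otherwise $G_v$ is infinite; it is co-Hopfian by Remark \ref{crucial}, and it is non-elementary, since an infinite two-ended group admits a reduced splitting $A\ast_C B$ with $[A:C]=[B:C]=2$ and therefore fails the criterion of Theorem \ref{Moioli2}. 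Moreover the connected subgraph $\Lambda_v$ of $\Delta$ carrying $G_v$ has all its edge groups of order $>k$, so it has strictly fewer edges than $\Delta$ (at least one edge of order $k$ has been removed); after reducing, $G_v$ admits a reduced Stallings splitting with strictly fewer edges, to which the induction hypothesis applies.

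The crucial point is to check that $\psi_v\sim\mathrm{id}_{G_v}$, i.e. that for each finite subgroup $H\leq G_v$ the conjugating element can be chosen inside $G_v$ rather than merely in $G$. Every finite $H$ is contained in a conjugate $W$ (inside $G_v$) of a vertex group of $\Lambda_v$, and such a vertex group has order $>k$, being incident in the connected graph $\Lambda_v$ to an edge whose group has order $>k$. As $\phi\sim\mathrm{id}_G$, there is $a\in G$ with $\psi_v|_W=\mathrm{ad}(a)|_W$, and by construction $aWa^{-1}=\psi_v(W)\subseteq G_v$. Now $W$ has order $>k$, so it fixes a unique vertex of $T_k$, necessarily $v$; likewise $aWa^{-1}$ fixes the unique vertex $av$, but since $aWa^{-1}\subseteq G_v$ it also fixes $v$, forcing $av=v$ and hence $a\in G_v$. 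Restricting to $H$ shows $\psi_v|_H=\mathrm{ad}(a)|_H$ with $a\in G_v$, which is exactly what the definition of $\psi_v\sim\mathrm{id}_{G_v}$ requires.

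By the induction hypothesis $G_v$ is rigid in the sense of Definition \ref{rigid}, so $\psi_v$ is an automorphism, in particular injective, and therefore $\phi|_{G_v}$ is injective. Since this holds for every vertex group of $\Delta_k$, Lemma \ref{lemmeperin} yields that $\phi$ is an automorphism. I expect the main obstacle to be precisely the equivalence $\psi_v\sim\mathrm{id}_{G_v}$: a priori the element conjugating $\phi$ to the identity on a finite subgroup lives in $G$ and not in $G_v$, and the device above—reducing to finite subgroups of order $>k$, which fix a unique vertex of $T_k$—is what confines it to $G_v$. Beyond this, one must verify that the induction is well-founded (the edge count strictly drops) and that the elementary vertex groups are correctly disposed of (finite ones directly, two-ended ones being excluded by co-Hopfianity via Theorem \ref{Moioli2}).
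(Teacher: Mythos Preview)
Your proof is correct and follows essentially the same inductive descent as the paper's: pass to a vertex group $G_v$ of $\Delta_k$, note it is co-Hopfian with strictly larger minimal edge order (equivalently, strictly fewer edges), and iterate until the vertex group is finite. In fact you are more explicit than the paper on the key point that $\psi_v\sim\mathrm{id}_{G_v}$ (with conjugator in $G_v$, not merely in $G$); the paper simply asserts this, whereas your argument via finite subgroups of order $>k$ fixing a unique vertex of $T_k$ is exactly what is needed to justify it.
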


\begin{proof}Let $\Delta$ be a Stallings splitting of $G$. Let $k$ denote the least order of an edge group of $\Delta$. Denote by $\Delta_k$ the splitting of $G$ obtained from $\Delta$ by collapsing each edge $e$ such that $\vert G_e\vert > k$.

Let $\phi$ be an endomorphism of $G$ such that $\phi\sim \mathrm{id}_G$. Assume towards a contradiction that $\phi$ is not injective. Then, by Lemma \ref{lemmeperin}, there exists a vertex $v$ of $\Delta_k$ such that $\phi$ is not injective on $G_v$. Moreover, by Lemma \ref{unpetitlemme}, there exists an element $g\in G$ such that $\phi(G_v)$ is contained in $G_v^g$. As a consequence, $\mathrm{ad}(g^{-1})\circ \phi$ is a non-injective endomorphism of $G_v$ that coincides with a conjugation on every finite subgroup of $G_v$.

If $G_v$ is finite, we get a contradiction since $\phi$ is injective on finite subgroups of $G$. Otherwise, the group $G_v$ splits as a non-trivial tree of finite groups $\Delta_v$, and the least order of an edge group of $\Delta_v$ is strictly greater than $k$, by definition of $\Delta$. Then, we repeat the previous operation. Since there are only finitely many orders of edge groups of $\Delta$, we get a contradiction after finitely many iterations.

Hence, every endomorphism $\phi$ of $G$ such that $\phi\sim \mathrm{id}_G$ is injective. Since $G$ is co-Hopfian by assumption, $\phi$ is an automorphism.\end{proof}

\section{Proofs of the main results}

\subsection{Elementary equivalence}

We shall prove the following result.

\begin{te}\label{isombis}Let $G$ and $G'$ be two co-Hopfian virtually free groups. The following three assertions are equivalent.
\begin{enumerate}
    \item $G$ and $G'$ are $\forall\exists$-equivalent.
    \item $G$ and $G'$ are elementarily equivalent.
    \item $G$ and $G'$ are isomorphic.
\end{enumerate}
\end{te}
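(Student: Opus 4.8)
The implications $(3)\Rightarrow(2)\Rightarrow(1)$ are immediate: an isomorphism preserves all first-order sentences, and every $\forall\exists$-sentence is in particular a first-order sentence. So the entire content of the theorem is the implication $(1)\Rightarrow(3)$, and my plan is to deduce an isomorphism from $\forall\exists$-equivalence by producing injective homomorphisms in \emph{both} directions and then exploiting co-Hopfianity. Indeed, once I have injective homomorphisms $i:G\to G'$ and $j:G'\to G$, the composite $j\circ i$ is an injective endomorphism of $G$, hence an automorphism since $G$ is co-Hopfian; therefore $j$ is onto, and being injective it is an isomorphism, so $G\simeq G'$. Thus it suffices to prove that $G$ embeds into $G'$, the reverse embedding following by exchanging the roles of $G$ and $G'$.

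To obtain the embedding, I would encode homomorphisms out of $G$ as tuples: fix a finite presentation $G=\langle s_1,\ldots,s_n\mid R\rangle$ (a virtually free group is finitely presented), and for a tuple $\bm{x}$ write $R(\bm{x})$ for the finite conjunction $\bigwedge_{r\in R}r(\bm{x})=1$, so that $R(\bm{x})$ holds in a group $K$ exactly when $s_i\mapsto x_i$ defines a homomorphism $\phi_{\bm{x}}:G\to K$. Let $F\subset G\setminus\{1\}$ be the finite set supplied by Proposition \ref{sa2} (the version using $\mathrm{Aut}_0(G)$), each element written as a word $f(s_1,\ldots,s_n)$, and let $\psi_G$ be the existential formula of Lemma \ref{related} detecting the relation $\sim$. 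I then consider the sentence
\[\Theta:\ \exists\bm{x}\,\Big(R(\bm{x})\wedge\forall\bm{y}\,\big(R(\bm{y})\wedge\psi_G(\bm{x},\bm{y})\Rightarrow\textstyle\bigwedge_{f\in F}f(\bm{y})\neq 1\big)\Big).\]
Pulling the existential quantifiers of $\psi_G$ out of the premise turns $\Theta$ into an $\exists\forall$-sentence, and I would note that $\forall\exists$-equivalence is the same as $\exists\forall$-equivalence: the negation of an $\exists\forall$-sentence is a $\forall\exists$-sentence, so $G$ and $G'$ agree on $\Theta$.

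The two verifications are where rigidity and Proposition \ref{sa2} enter. First, $G\models\Theta$: take $\bm{x}$ to be the generating tuple, so $\phi_{\bm{x}}=\mathrm{id}_G$; any $\bm{y}$ satisfying the premise yields an endomorphism $\phi_{\bm{y}}\sim\mathrm{id}_G$, which by rigidity (Proposition \ref{theorigid}) is an automorphism, hence injective, so $f(\bm{y})=\phi_{\bm{y}}(f)\neq 1$ for every $f\in F\subset G\setminus\{1\}$. Transferring $\Theta$ gives $G'\models\Theta$, with a witness $\bm{x}$ defining $\phi_{\bm{x}}:G\to G'$. If $\phi_{\bm{x}}$ were not injective, Proposition \ref{sa2} would provide $\sigma\in\mathrm{Aut}_0(G)$ and some $f_0\in F$ with $\phi_{\bm{x}}(\sigma(f_0))=1$; taking $\bm{y}$ to represent $\phi_{\bm{x}}\circ\sigma$ satisfies the premise of $\Theta$, because $\phi_{\bm{x}}\circ\sigma\sim\phi_{\bm{x}}$ (the remark following Proposition \ref{sa2}), yet $f_0(\bm{y})=\phi_{\bm{x}}(\sigma(f_0))=1$, contradicting the conclusion $\bigwedge_f f(\bm{y})\neq 1$. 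Hence $\phi_{\bm{x}}$ is injective, and $G$ embeds into $G'$.

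The main obstacle is the correct design of $\Theta$ together with the quantifier bookkeeping: the mathematical substance is carried entirely by rigidity (to see $G\models\Theta$) and by Proposition \ref{sa2} (to extract injectivity from $G'\models\Theta$), but the delicate point is that the universal clause must be applicable to the competitor $\phi_{\bm{x}}\circ\sigma$, which forces the use of $\mathrm{Aut}_0(G)$ rather than $\mathrm{Aut}(G)$ — only for $\sigma\in\mathrm{Aut}_0(G)$ is $\phi_{\bm{x}}\circ\sigma\sim\phi_{\bm{x}}$, so that $\psi_G(\bm{x},\bm{y})$ holds. One must also check that the resulting quantifier alternation genuinely lands in the $\exists\forall$ fragment and recall that this fragment is transferred by $\forall\exists$-equivalence. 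With the embedding $G\hookrightarrow G'$ in hand, I would run the symmetric argument (using the presentation and finite set attached to $G'$, whose rigidity is again Proposition \ref{theorigid}) to obtain $G'\hookrightarrow G$, and conclude $G\simeq G'$ by co-Hopfianity as explained above.
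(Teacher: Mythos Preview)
Your proof is correct and follows essentially the same strategy as the paper: both arguments combine rigidity (Proposition \ref{theorigid}), the shortening statement over $\mathrm{Aut}_0(G)$ (Proposition \ref{sa2}), and the existential formula $\psi_G$ to transfer a single two-quantifier sentence between $G$ and $G'$, then conclude via mutual embeddings and co-Hopfianity. The only cosmetic difference is the direction of transfer: the paper assumes no embedding exists, builds a $\forall\exists$-sentence $\mu$ (the negation of your $\Theta$) that holds in $G'$, pushes it to $G$, and derives a contradiction with rigidity, whereas you verify $\Theta$ in $G$ via rigidity first and push it to $G'$ to extract the embedding directly.
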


This theorem is an immediate consequence of the following proposition, together with the fact that co-Hopfian virtually free groups are rigid (see Proposition \ref{theorigid}).

\begin{prop}Let $G$ and $G'$ be two virtually free groups. Suppose that $G$ is a rigid, and that $G$ and $G'$ are $\forall\exists$-equivalent. Then $G$ embeds into $G'$.\end{prop}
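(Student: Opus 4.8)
The plan is to use the machinery already assembled in the paper, especially Proposition~\ref{sa2} and Lemma~\ref{related}, to transfer an existential statement about endomorphisms from $G$ to $G'$. First I would fix a finite generating set $\lbrace s_1,\ldots,s_n\rbrace$ of $G$ and recall that rigidity of $G$ means: every endomorphism $\phi\colon G\to G$ with $\phi\sim\mathrm{id}_G$ is an automorphism, hence in particular injective. The key observation is that Proposition~\ref{sa2} provides a \emph{finite} subset $F\subset G\setminus\lbrace 1\rbrace$ such that every non-injective homomorphism $G\to G'$ becomes, after precomposition with some $\sigma\in\mathrm{Aut}_0(G)$, a homomorphism killing an element of $F$. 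Since $\sigma\in\mathrm{Aut}_0(G)$ satisfies $\sigma\sim\mathrm{id}_G$ (so that $\phi\circ\sigma\sim\phi$ for the equivalence relation of Definition~\ref{equiv}), precomposition does not disturb the equivalence class. This is exactly the device the remark after Definition~\ref{Aut0} is flagging as useful.

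The heart of the argument is to write down a single $\forall\exists$-sentence $\Phi$, satisfied by $G$, whose meaning is: ``there is a tuple $(x_1,\ldots,x_n)$ satisfying the multiplication table of $G$ relative to $\lbrace s_i\rbrace$, which is equivalent to the identity in the sense of $\psi_G$, and such that no automorphic translate of it kills any element of $F$.'' Concretely I would build $\Phi$ as follows. Let $\lbrace s_i\rbrace$ generate $G$ with (finitely many, since $G$ is finitely presented being virtually free) defining relators $r_1,\ldots,r_m$; these give equations $r_\ell(x_1,\ldots,x_n)=1$ asserting that $x_i\mapsto s_i$ extends to a homomorphism $G\to G'$. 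Next, using the existential formula $\psi_G$ of Lemma~\ref{related} applied with the \emph{identity} assignment, impose $\psi_G(x_1,\ldots,x_n,s_1,\ldots,s_n)$, which forces the homomorphism to be equivalent to $\mathrm{id}_G$. Finally, to rule out non-injectivity, I use Proposition~\ref{sa2}: a homomorphism $\phi$ is injective provided that for every $\sigma\in\mathrm{Aut}_0(G)$, $\phi\circ\sigma$ kills no element of $F$. But there are infinitely many $\sigma$, so the naive translation is not first-order. The fix is that I only need to \emph{exclude} non-injective $\phi$ that remain equivalent to $\mathrm{id}_G$; since the finitely many $\sigma\in\mathrm{Aut}_0(G)$ relevant to a given $F$ are captured by finitely many word-maps, I encode ``$\phi(\sigma(f))\neq 1$ for each $f\in F$ and each coset representative $\sigma$'' as a finite conjunction of inequations $w(x_1,\ldots,x_n)\neq 1$, one for each element of the finite set $\sigma_1(F)\cup\cdots\cup\sigma_N(F)$ produced by Proposition~\ref{sa2}. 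This makes $\Phi$ a genuine existential (hence $\forall\exists$) sentence.

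The logical payoff is then routine: $G$ satisfies $\Phi$ because $\mathrm{id}_G$ itself witnesses the existential quantifier—it extends to a homomorphism, it is trivially equivalent to itself via $\psi_G$, and being injective it kills no nontrivial element, so all the inequations hold. Since $G\equiv_{\forall\exists}G'$, the group $G'$ also satisfies $\Phi$. A witnessing tuple $(g'_1,\ldots,g'_n)\in (G')^n$ in $G'$ defines a homomorphism $\phi\colon G\to G'$ with $s_i\mapsto g'_i$ (the relators hold), with $\phi\sim\mathrm{id}_G$ (by $\psi_G$ and Lemma~\ref{related}), and such that no element of $\sigma_1(F)\cup\cdots\cup\sigma_N(F)$ lies in $\ker\phi$. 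By the contrapositive of Proposition~\ref{sa2}, $\phi$ must be injective, i.e.\ $G$ embeds into $G'$, as desired.

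The main obstacle I anticipate is precisely the first-order encoding of injectivity. Injectivity of an arbitrary homomorphism is not expressible by a single sentence, and the content of Proposition~\ref{sa2} is exactly what makes it expressible \emph{here}: it reduces the infinitely many conditions ``$\phi$ is injective'' to the finitely many conditions ``$\phi$ kills no element of a fixed finite set $F$,'' provided one is allowed to precompose by automorphisms in $\mathrm{Aut}_0(G)$—and rigidity, together with the fact that $\mathrm{Aut}_0(G)$-precomposition preserves the class of $\mathrm{id}_G$, guarantees that the witness one extracts from $G'$ is automatically equivalent to the identity and hence, once injective, an embedding. I would double-check that the finite set $\sigma_1(F)\cup\cdots\cup\sigma_N(F)$ from Proposition~\ref{sa2} is the correct one to insert into the inequations, so that the contrapositive applies directly with $\sigma\in\mathrm{Aut}_0(G)$ rather than general $\sigma\in\mathrm{Aut}(G)$; this is the one place where the refinement from Proposition~\ref{sa} to Proposition~\ref{sa2} is genuinely needed.
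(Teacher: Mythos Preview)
Your proposal has a genuine gap in the step where you claim to encode injectivity by a finite conjunction of inequations. Proposition~\ref{sa2} says that for every \emph{non-injective} $\phi$ there exists \emph{some} $\sigma\in\mathrm{Aut}_0(G)$ with $\ker(\phi\circ\sigma)\cap F\neq\varnothing$. The contrapositive therefore requires that $\ker(\phi\circ\sigma)\cap F=\varnothing$ for \emph{every} $\sigma\in\mathrm{Aut}_0(G)$, and $\mathrm{Aut}_0(G)$ is infinite (it contains $\mathrm{Inn}(G)$). Your finite conjunction $\bigwedge_{f}w_f(x_1,\ldots,x_n)\neq 1$ only asserts $\ker(\phi)\cap F'=\varnothing$, i.e.\ the single case $\sigma=\mathrm{id}$; this does not force $\phi$ to be injective. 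The automorphisms $\sigma_1,\ldots,\sigma_N$ you invoke are coset representatives of $\mathrm{Aut}_0(G)$ in $\mathrm{Aut}(G)$, used to deduce Proposition~\ref{sa2} from Proposition~\ref{sa}; they do not reduce the quantification over $\mathrm{Aut}_0(G)$ itself to a finite one. A symptom of the error is that your $\Phi$ is purely existential, so your argument would prove the conclusion from $\exists$-equivalence alone; but that is Theorem~\ref{isom2}, which needs the extra hypothesis that $\mathrm{Out}(G)$ is finite (equivalently, that $G$ is strongly co-Hopfian). There is also a secondary problem: the clause $\psi_G(x_1,\ldots,x_n,s_1,\ldots,s_n)$ involves the constants $s_i\in G$ and so cannot appear in a sentence to be transferred to $G'$; and ``$\phi\sim\mathrm{id}_G$'' has no meaning for a map $\phi:G\to G'$ anyway.

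The paper's proof goes in the opposite direction and is a proof by contradiction. Assuming $G$ does not embed into $G'$, every $\phi:G\to G'$ is non-injective, so by Proposition~\ref{sa2} there exists $\phi':=\phi\circ\sigma$ with $\sigma\in\mathrm{Aut}_0(G)$, hence $\phi'\sim\phi$, killing some element of $F$. One then writes a $\forall\exists$-sentence $\mu$ satisfied by $G'$, meaning ``for every homomorphism $\phi$ there is a homomorphism $\phi'\sim\phi$ with $\phi'(g_i)=1$ for some $i$'', transfers $\mu$ to $G$, instantiates the universal quantifier with $\phi=\mathrm{id}_G$, and obtains an endomorphism $\phi'\sim\mathrm{id}_G$ with nontrivial kernel, contradicting rigidity. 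The point is that the uncontrolled existential quantifier over $\mathrm{Aut}_0(G)$ in Proposition~\ref{sa2} is absorbed as the \emph{inner} existential quantifier of a $\forall\exists$-sentence, and the relation $\phi'=\phi\circ\sigma$ is relaxed to the first-order expressible relation $\phi'\sim\phi$.
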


\begin{proof}By Proposition \ref{sa2}, there exists a finite subset $F=\lbrace g_1,\ldots,g_k\rbrace$ of $G\setminus \lbrace 1\rbrace$ such that for every non-injective homomorphism $\phi : G\rightarrow G'$, there exists an automorphism $\sigma\in\mathrm{Aut}_0(G)$ such that $\phi\circ \sigma(g_i)=1$ for some $1\leq i\leq k$. Let us fix a finite presentation $G=\langle s_1,\ldots ,s_n \ \vert \ \Sigma(s_1,\ldots,s_n)=1 \rangle$, where $\Sigma(x_1,\ldots,x_n)=1$ is a finite system of equations in the variables $x_1,\ldots,x_n$. For every integer $1\leq i\leq k$, the element $g_i$ can be written as a word $w_i(s_1,\ldots ,s_n)$ in the generators $s_1,\ldots,s_n$.

Suppose towards a contradiction that $G$ does not embed into $G'$. Then, for every morphism $\phi : G\rightarrow G'$ (which is automatically non-injective), there exists an automorphism $\sigma\in\mathrm{Aut}_0(G)$ such that $\phi\circ \sigma(g_i)=1$ for some $1\leq i\leq k$. Observe that $\phi':=\phi\circ \sigma$ and $\phi$ are equivalent in the sense of Definition \ref{equiv}, and hence $G'$ satisfies the existential formula $\psi_G\left(\phi(s_1),\ldots ,\phi(s_n),\phi'(s_1),\ldots ,\phi'(s_n)\right)$ given by Lemma \ref{related}. Then, observe that there is a one-to-one correspondence between the set of morphisms from $G$ to $G'$ and the set of solutions in $G'^n$ of the system of equations $\Sigma(x_1,\ldots,x_n)=1$. Therefore, we can write a $\forall\exists$-sentence $\mu$ (see below) that is satisfied by $G'$, and whose meaning is "for every morphism $\phi : G\rightarrow G'$, there is a morphism $\phi': G\rightarrow G'$ such that $\phi'\sim \phi$ and $\phi'(g_i)=1$ for some $1\leq i\leq k$".
\[\mu : \forall x_1 \ldots\forall x_n \exists y_1 \ldots\exists y_n \  \left(\rule{0cm}{1cm}\Sigma(x_1,\ldots,x_n)=1\Rightarrow
 \left(\rule{0cm}{1cm}
       \begin{array}{lc}
                        & \Sigma(y_1,\ldots,y_n)=1  \\
        & \wedge \ \psi_G(x_1,\ldots,x_n,y_1,\ldots,y_n) \\
                        &   \wedge\underset{1\leq i\leq k}{\bigvee} w_i(y_1,\ldots,y_n)=1 \\                                       
        \end{array}
     \right)\right).\]
This $\forall\exists$-sentence is satisfied by $G'$. But $G$ and $G'$ have the same $\forall\exists$-theory, so $\mu$ is satisfied by $G$ as well. The interpretation of $\mu$ in $G$ is "for every morphism $\phi : G\rightarrow G$, there is a morphism $\phi': G\rightarrow G$ such that $\phi'\sim \phi$ and $\phi'(g_i)=1$ for some $1\leq i\leq k$". Now, take for $\phi$ the identity of $G$. Since $G$ is assumed to be rigid, the endomorphism $\phi'$ is an automorphism, which contradicts the equality $\phi'(g_i)=1$. Therefore $G$ embeds into $G'$.\end{proof}

\subsection{Homogeneity}

In this subsection, we shall prove that co-Hopfian virtually free groups are $\forall\exists$-homogeneous. First, we need some preliminary results.

\begin{de}\label{class_permuting}Let $G$ be a virtually free group. We say that an endomorphism $\phi$ of $G$ is a class-permuting endomorphism if there exists an integer $n\geq 1$ such that $\phi^n\sim\mathrm{id}_G$ (in the sense of Definition \ref{equiv}).\end{de}

\begin{rque}The terminology is motivated by the fact that an endomorphism $\phi$ is class-permuting if and only if it induces a permutation of the set of conjugacy classes of (maximal) finite subgroups of $G$ (see Lemma \ref{terminology} below).\end{rque}

If $G$ is a co-Hopfian virtually free group, every class-permuting endomorphism of $G$ is an automorphism, by Proposition \ref{theorigid}. It is not completely obvious from Definition \ref{class_permuting} that being a class-permuting endomorphism is expressible via a first-order sentence. As a first step, we need to reformulate this definition.

\begin{lemme}\label{terminology}Let $G$ be a virtually free group. An endomorphism $\phi$ of $G$ is class-permuting if and only if the following two conditions hold.
\begin{enumerate}
\item If $A$ is a maximal finite subgroup, then $\phi(A)$ is a maximal finite subgroup.
\item If $A$ and $B$ are two maximal finite subgroups, then $\phi(A)$ and $\phi(B)$ are conjugate if and only if $A$ and $B$ are conjugate.
\end{enumerate}
\end{lemme}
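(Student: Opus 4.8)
The plan is to prove both implications by analyzing how a class-permuting endomorphism interacts with the finite subgroup structure. Recall that $G$ has only finitely many conjugacy classes of maximal finite subgroups, say represented by $M_1,\ldots,M_r$, and every finite subgroup is contained in a maximal one. First I would unpack the definition: $\phi$ is class-permuting means some power $\phi^n$ is equivalent to $\mathrm{id}_G$, i.e.\ for every finite subgroup $H$ of $G$ there is $g'\in G$ with $\phi^n_{|H}=\mathrm{ad}(g')\circ\mathrm{id}_{|H}$, so $\phi^n(H)=H^{g'}$ is a \emph{conjugate} of $H$.

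For the forward direction (class-permuting $\Rightarrow$ (1) and (2)), suppose $\phi^n\sim\mathrm{id}_G$. I would first show $\phi$ maps maximal finite subgroups to maximal finite subgroups. Since $\phi^n(A)=A^{g}$ for any finite subgroup $A$, the map $\phi$ is injective on every finite subgroup (if $\phi$ killed a nontrivial element of a finite subgroup, $\phi^n$ would too, contradicting $\phi^n(A)=A^g\cong A$). So $\phi(A)$ is a finite subgroup isomorphic to $A$, contained in some maximal finite $A'$; applying the same argument to the chain and using that $\phi^n(A)=A^g$ has the same order as $A$, I would deduce $\phi(A)$ is itself maximal — the key point being that $|\phi(A)|=|A|$ forces $\phi(A)=A'$ once one checks $A'$ cannot be strictly larger (otherwise iterating $\phi$ would strictly increase orders infinitely, impossible since orders are bounded and $\phi^n(A)\cong A$). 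This gives condition (1). For condition (2), if $A,B$ are conjugate then clearly $\phi(A),\phi(B)$ are conjugate; conversely, if $\phi(A)$ and $\phi(B)$ are conjugate, then so are $\phi^n(A)=A^g$ and $\phi^n(B)=B^h$, hence $A$ and $B$ are conjugate.

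For the reverse direction ((1) and (2) $\Rightarrow$ class-permuting), I would argue that conditions (1) and (2) force $\phi$ to induce a genuine permutation $\bar\phi$ of the finite set $\mathcal{C}=\{[M_1],\ldots,[M_r]\}$ of conjugacy classes of maximal finite subgroups. Condition (1) ensures $\bar\phi$ is well-defined (maximal goes to maximal) and condition (2) ensures it is injective on $\mathcal{C}$ (distinct classes go to distinct classes), hence bijective since $\mathcal{C}$ is finite. Letting $n=|\mathcal{C}|!$ (or the order of $\bar\phi$ in the symmetric group on $\mathcal{C}$), the permutation $\bar\phi^{\,n}$ is the identity, so $\phi^n(M_i)$ is conjugate to $M_i$ for each $i$. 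Since every finite subgroup is conjugate to a subgroup of some $M_i$ and $\phi$ is injective on finite subgroups, one checks $\phi^n(H)$ is conjugate to $H$ for every finite subgroup $H$, which is exactly $\phi^n\sim\mathrm{id}_G$.

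The main obstacle I expect is the maximality argument in the forward direction: showing that $\phi$ cannot send a maximal finite subgroup $A$ strictly inside a larger finite subgroup. The clean way to handle this is to exploit that $\phi^n(A)=A^g$ has exactly the order of $A$, together with the fact that $\phi$ is injective on finite subgroups, to get $|A|\leq|\phi(A)|\leq\cdots\leq|\phi^n(A)|=|A|$, forcing all these orders equal and hence $\phi(A)$ maximal. Care is needed because $\phi(A)$ maximal is an intrinsic statement, whereas the equivalence hypothesis only controls $\phi^n$; the order-monotonicity chain is what bridges this gap. Once this is established, the remaining verifications are the routine bookkeeping with conjugacy classes indicated above.
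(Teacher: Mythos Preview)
Your forward direction is fine in spirit, though the chain should be written as $|A|\le |A_1|\le\cdots\le|A_n|$ where $A_{k+1}$ is a maximal finite subgroup containing $\phi(A_k)$; then $\phi^n(A)\subseteq A_n$ and $\phi^n(A)$ is conjugate to $A$ (hence maximal), forcing $|A_n|=|A|$ and so $\phi(A)=A_1$.

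The genuine gap is in the reverse direction. Knowing that $\bar\phi^{\,n}=\mathrm{id}$ on $\mathcal{C}$ only tells you that $\phi^n(M_i)$ is \emph{conjugate to $M_i$ as a subgroup}; it does not say that $\phi^n|_{M_i}$ is conjugation by an element of $G$. Concretely, $\phi^n|_{M_i}=\mathrm{ad}(g_i)\circ\beta_i$ for some $\beta_i\in\mathrm{Aut}(M_i)$, and if $\beta_i$ is not realised by conjugation in $G$ then $\phi^n\not\sim\mathrm{id}_G$. Your sentence ``$\phi^n(H)$ is conjugate to $H$ for every finite $H$, which is exactly $\phi^n\sim\mathrm{id}_G$'' is therefore wrong on two counts: first, $\phi^n(M_i)$ conjugate to $M_i$ does not imply $\phi^n(H)$ conjugate to $H$ for an arbitrary subgroup $H\le M_i$ (a non-inner automorphism of $M_i$ can move $H$ to a non-conjugate subgroup); second, even ``$\phi^n(H)$ conjugate to $H$ for all finite $H$'' is strictly weaker than $\phi^n\sim\mathrm{id}_G$, which demands that $\phi^n|_H$ equal $\mathrm{ad}(g')|_H$ pointwise. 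The fix, which is what the paper does, is to pass to a further multiple: an induction gives $\phi^{nk}|_{M_i}=\mathrm{ad}(h_k)\circ\beta_i^{\,k}$, so choosing $k$ divisible by the orders of all the $\beta_i$ makes $\phi^{nk}|_{M_i}$ inner for each $i$, and then the restriction to any finite $H\subseteq M_i^g$ is inner as well.
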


\begin{proof}Let $E$ denote the set of conjugacy classes of maximal finite subgroups of $G$. Suppose that the two conditions above hold. By the first condition, $\phi$ induces a well-defined map from $E$ to $E$. By the second condition this map is injective, and hence it is bijective since $E$ is finite. Therefore, there exists an integer $m\geq 1$ such that $\phi^m$ maps every maximal finite subgroup $A$ to a conjugate of $A$. Then, there is a nonzero multiple $n$ of $m$ such that $\phi^n$ is equivalent to $\mathrm{id}_G$. Conversely, it is not hard to see that every class-permuting endomorphism of $G$ satisfies the two conditions above.\end{proof}

Using the previous lemma, we shall prove that being class-permuting can be expressed via a universal formula. We shall need a characterization of maximal finite subgroups in a virtually free group.

\begin{lemme}\label{maximaux}Let $A$ be a finite subgroup of a virtually free group $G$. The following two conditions are equivalent.
\begin{enumerate}
\item $A$ is a maximal finite subgroup of $G$.
\item For every element $g\in G\setminus A$ of finite order, there exists an element $a\in A$ such that $ga$ has infinite order.
\end{enumerate}
\end{lemme}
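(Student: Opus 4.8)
The goal is to characterize maximal finite subgroups of a virtually free group $G$ purely in terms of the element-level behavior described in condition (2), which will later allow this property to be captured by a first-order formula. The plan is to prove both implications directly using elementary facts about torsion elements and finite subgroups of virtually free groups.

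For the direction $(1)\Rightarrow(2)$, I would argue by contraposition. Suppose condition (2) fails, so there exists an element $g\in G\setminus A$ of finite order such that $ga$ has finite order for \emph{every} $a\in A$. The plan is to show that $\langle A, g\rangle$ is then a finite subgroup strictly containing $A$, contradicting maximality. The key observation is that in a virtually free group, a subgroup consisting entirely of torsion elements and generated by finitely many such is finite; more concretely, I would show that $A\cup Ag$ is closed enough under multiplication to force $\langle A,g\rangle$ to be a finite subset. Indeed, for each $a\in A$ the element $ag$ has finite order, and $A$ itself is finite, so the set $A\cup Ag = \{a, ag : a\in A\}$ consists of torsion elements; one then checks that the subgroup it generates remains finite, using that a virtually free group cannot contain an infinite torsion subgroup (any infinite subgroup of a virtually free group contains a free subgroup of rank at least one, hence an element of infinite order). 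Since $g\notin A$, this finite subgroup properly contains $A$, contradicting that $A$ is maximal finite.

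For the direction $(2)\Rightarrow(1)$, again by contraposition: suppose $A$ is not maximal finite, so $A$ is properly contained in some finite subgroup $B$. I would pick any element $g\in B\setminus A$; then $g$ has finite order (as an element of the finite group $B$), and $g\notin A$, yet for every $a\in A$ the product $ga$ lies in $B$ and therefore also has finite order. This directly violates condition (2), completing the contrapositive.

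The main obstacle I anticipate is making rigorous the claim in $(1)\Rightarrow(2)$ that the hypothesis ``$ga$ has finite order for all $a\in A$'' forces $\langle A, g\rangle$ to be finite. The cleanest route is to invoke the structural fact that a finitely generated virtually free group has finitely many conjugacy classes of finite subgroups and, crucially, that every torsion element is elliptic in the Bass-Serre tree of a Stallings splitting, so it fixes a vertex; one then wants to conclude that $A$ together with $g$ fixes a common vertex. In a tree, $A$ fixes a unique minimal fixed-point set (a subtree), and the condition that $ga$ is elliptic for all $a\in A$ should be leveraged to show $g$ fixes a vertex in the fixed-point set of $A$, placing $\langle A,g\rangle$ inside a single finite vertex stabilizer. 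Verifying that this common fixed point exists — rather than merely each element being individually elliptic — is the delicate point, and I would handle it by analyzing the action on the tree of the splitting $\Delta$ and using that elliptic subgroups generated by elements with pairwise (or jointly) bounded orbits are themselves elliptic.
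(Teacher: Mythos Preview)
Your $(2)\Rightarrow(1)$ is essentially identical to the paper's. For $(1)\Rightarrow(2)$ your argument is correct but organised differently from the paper's. The paper proves this direction \emph{directly}: taking a reduced Stallings tree $T$, one has $A=G_v$ for some vertex $v$; given $g\in G_w\setminus A$ of finite order with $w\neq v$, the paper picks $a\in G_v$ not fixing the first edge of the segment $[v,w]$ and applies Serre's lemma in the forward direction (two elliptics with disjoint fixed-point sets have hyperbolic product) to conclude that $ga$ has infinite order. You instead argue by contraposition: assuming every $ga$ is elliptic, Serre's lemma in the contrapositive gives $\mathrm{Fix}(g)\cap\mathrm{Fix}(a)\neq\varnothing$ for each $a\in A$, and then the Helly property for subtrees yields a common fixed point for $\langle A,g\rangle$, placing it inside a finite vertex stabiliser. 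Both routes rest on the same lemma of Serre; yours trades the explicit construction of $a$ for an appeal to Helly, which is perfectly fine and arguably cleaner, though less constructive.

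One caution: your first sketch for $(1)\Rightarrow(2)$, namely showing that ``$A\cup Ag$ is closed enough under multiplication'' so that $\langle A,g\rangle$ is a torsion (hence finite) subgroup, does not work as written. Knowing that $g$ and every $ga$ are torsion says nothing a priori about products like $(ga)(gb)$, so you cannot conclude $\langle A,g\rangle$ is torsion from those hypotheses alone without already invoking the tree. You correctly identify this as the obstacle and fall back on the Bass--Serre argument; just be sure in a final write-up to drop the combinatorial detour and go straight to Serre plus Helly.
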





\begin{proof}Let $A$ be a maximal finite subgroup of $G$, and let us prove that the second condition above is satisfied. Let $\Delta$ be a reduced Stallings splitting of $G$, and let $T$ be its Bass-Serre tree. Since $A$ is finite, it is contained in a vertex group $G_v$ of $T$. Since $A$ is maximal among finite subgroups of $G$, we have $A=G_v$. If $A$ is the unique maximal finite subgroup of $G$ (i.e.\ if $G$ is a finite extension of a free group), then every element of $G$ of finite order belongs to $A$ and the second condition is obvious. Otherwise, let $g$ be an element of $G\setminus A$ of finite order. The element $g$ belongs to a vertex group $G_w$ of $T$, with $v\neq w$. Let $e$ denote the path between $v$ and $w$ in $T$. Note that $g$ does not belong to $G_e$. Moreover, since  $\Delta$ is reduced, and since $G$ is not a finite extension of a free group, there exists an element $a\in A=G_v$ such that $a$ does not belong to $G_e$. The element $ga$ has infinite order: indeed, if $ga$ had finite order, then the subgroup $\langle g,a\rangle$ would be elliptic in $T$ (by a well-known lemma of Serre), which is not possible since $\mathrm{Fix}(a)\cap\mathrm{Fix}(g)=\varnothing$.

Conversely, let us prove the contrapositive of $(2)\Rightarrow (1)$. Let $A$ be a finite subgroup of $G$ that is not maximal. Observe that $A$ is not a vertex group, because $T$ is reduced. Let $v$ be a vertex of $T$ fixed by $A$. There exists an element $g$ in $G_v\setminus A$; this element has finite order, and $ga$ has finite order for every $a\in A$.\end{proof}


The following lemma shows that being class-permuting can be expressed by means of a universal formula.

\begin{lemme}\label{permuting}
Let $G$ be a virtually free group, and let $\lbrace s_1,\ldots ,s_n\rbrace$ be a generating set for $G$. There exists a universal formula ${\theta}(x_1,\ldots , x_{n})$ with $n$ free variables such that, for every endomorphism $\phi$ of $G$, the sentence ${\theta}(\phi(s_1),\ldots ,\phi(s_n))$ is satisfied by $G$ if and only if $\phi$ is class-permuting.
\end{lemme}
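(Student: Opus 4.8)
The plan is to express the two conditions of Lemma~\ref{terminology} by a universal formula in the images $\phi(s_1),\ldots,\phi(s_n)$, using the characterization of maximal finite subgroups from Lemma~\ref{maximaux}. The first step is to fix, once and for all, representatives $A_1,\ldots,A_r$ of the finitely many conjugacy classes of maximal finite subgroups of $G$, and for each $A_i$ to record its elements as words $w_{i,1}(\bm{s}),\ldots,w_{i,k_i}(\bm{s})$ in the generating set $\lbrace s_1,\ldots,s_n\rbrace$. Since every maximal finite subgroup is conjugate to some $A_i$, and since an endomorphism $\phi$ maps a subgroup $H=\langle h_1,\ldots,h_m\rangle$ to $\langle \phi(h_1),\ldots,\phi(h_m)\rangle$ where each $\phi(h_j)=h_j(\phi(s_1),\ldots,\phi(s_n))$ is obtained by substituting the images of the generators, the subgroups $\phi(A_i)$ are uniformly described by the words $w_{i,j}$ evaluated at $\phi(s_1),\ldots,\phi(s_n)$. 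The whole point is that, by Lemma~\ref{terminology}, it suffices to check conditions (1) and (2) on the finitely many representatives $A_1,\ldots,A_r$ rather than on all maximal finite subgroups.

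Next I would encode each of the two conditions. For condition (1), that each $\phi(A_i)$ is again a \emph{maximal} finite subgroup, I invoke the equivalence in Lemma~\ref{maximaux}: $\phi(A_i)$ is maximal if and only if for every $g\in G\setminus\phi(A_i)$ of finite order there is an element $a\in\phi(A_i)$ with $ga$ of infinite order. The subtlety is that Lemma~\ref{maximaux}(2) quantifies over all torsion elements $g$, and "having finite order" is itself only a countable disjunction of the conditions $g^N=1$, not obviously first-order. Here I would use that $G$ is virtually free, so there is a uniform bound $N$ (the exponent of the torsion, or the maximal order of a finite-order element) on the order of any finite-order element; thus "$g$ has finite order" becomes the single equation $g^N=1$, and "$ga$ has infinite order" becomes $(ga)^N\neq 1$. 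This makes the clause for condition~(1) a universal statement: $\forall g\,\bigl(g^N=1 \wedge g\notin\phi(A_i)\Rightarrow \bigvee_{a\in\phi(A_i)}(ga)^N\neq 1\bigr)$, where $g\notin\phi(A_i)$ and the disjunction over $a\in\phi(A_i)$ are both finite Boolean combinations of equations in $g$ and the words $w_{i,j}(\bm{x})$.

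For condition (2), that $\phi(A_i)$ and $\phi(A_j)$ are conjugate if and only if $A_i$ and $A_j$ are, I note that the conjugacy pattern of the fixed representatives $A_1,\ldots,A_r$ is a finite piece of data known in advance: for each pair $(i,j)$ I know whether $A_i$ and $A_j$ are conjugate in $G$. So I only need to express, for each pair, whether $\phi(A_i)$ and $\phi(A_j)$ are conjugate, which is the existential statement $\exists t\,\bigwedge_{\ell} t\,w_{i,\ell}(\bm{x})\,t^{-1}\in\phi(A_j)$ (conjugation carrying the generators of $\phi(A_i)$ into $\phi(A_j)$), and negate it when $A_i,A_j$ are non-conjugate. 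The difficulty is that this is prima facie existential, not universal, and I want a \emph{universal} formula overall; the resolution is that for the pairs where $A_i,A_j$ \emph{are} conjugate the requirement "$\phi(A_i),\phi(A_j)$ conjugate" follows automatically once $\phi$ is a homomorphism (conjugate subgroups have conjugate images), so no clause is needed there, and for the non-conjugate pairs the requirement is "$\phi(A_i),\phi(A_j)$ \emph{not} conjugate", i.e. the negation of an existential, which is universal: $\forall t\,\bigvee_{\ell} t\,w_{i,\ell}(\bm{x})\,t^{-1}\notin\phi(A_j)$. Taking the conjunction of all these clauses over $i$ and over the relevant pairs $(i,j)$ yields a single universal formula $\theta(x_1,\ldots,x_n)$. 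I then verify that $G\models\theta(\phi(s_1),\ldots,\phi(s_n))$ holds exactly when conditions (1) and (2) of Lemma~\ref{terminology} hold for $\phi$, hence exactly when $\phi$ is class-permuting. The main obstacle I anticipate is the bookkeeping around making "finite order" first-order via the uniform torsion bound $N$ and keeping every clause genuinely universal; once that is in place, the construction is a matter of assembling finitely many quantifier-free conditions inside outer $\forall$ quantifiers.
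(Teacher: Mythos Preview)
Your proposal is correct and follows essentially the same route as the paper: reduce via Lemma~\ref{terminology} to the finitely many representatives $A_1,\ldots,A_r$, express maximality of $\phi(A_i)$ universally using Lemma~\ref{maximaux}, and express non-conjugacy of $\phi(A_i),\phi(A_j)$ for $i\neq j$ as the negation of an existential. One small slip: you take $N$ to be ``the maximal order of a finite-order element'' and then assert that $g$ has finite order iff $g^N=1$, but a bound on orders is not enough---you need a common multiple (the paper uses $N!$ where $N$ is the maximal order of a finite subgroup; the exponent of the torsion also works). Also, since the $A_i$ are chosen as representatives of \emph{distinct} conjugacy classes, your case analysis ``if $A_i,A_j$ are conjugate'' is vacuous; only the non-conjugate clause is needed, exactly as in the paper.
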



\begin{proof}Let $A_1,\ldots,A_r$ be a collection on representatives of the conjugacy classes of maximal finite subgroups of $G$. By virtue of Lemma \ref{terminology}, we just have to check that the following two conditions are expressible via a universal formula:
\begin{enumerate}
\item for every $1\leq i\leq r$, $\phi(A_i)$ is a maximal finite subgroup, 
\item and for every $1\leq i\neq j\leq r$, $\phi(A_i)$ and $\phi(A_j)$ are not conjugate.
\end{enumerate}
The second condition is clearly a universal condition (in natural language: "for every $g\in G$, $\phi(A_i)$ and $g\phi(A_j)g^{-1}$ are distinct"). It remains to prove that the first condition is universal. Let $N$ denote the maximal order of a finite subgroup of $G$. By Lemma \ref{maximaux}, the first condition is equivalent to the following: for every element $g\in G\setminus \phi(A_i)$ of finite order (i.e.\ such that $g^{N!}=1$), there exists an element $h\in \phi(A_i)$ such that $gh$ has infinite order (i.e.\ such that $(gh)^{N!}\neq 1$). Again, this statement is expressible by a universal formula (indeed, the statement about the existence of $h\in \phi(A_i)$ such that $gh$ has infinite order does not require an existential quantifier since we just have to write a finite disjunction of inequalities).\end{proof}

We are ready to prove the main result of this subsection.

\begin{te}\label{homogeneous}Co-Hopfian virtually free groups are $\forall\exists$-homogeneous.\end{te}

\begin{proof}Let $G$ be a co-Hopfian virtually free group, and let $\bm{u}=(u_1,\ldots ,u_k)$ and $\bm{v}=(v_1,\ldots ,v_k)$ be two $k$-tuples of elements of $G$ having the same $\forall\exists$-type. We shall prove that there exists a class-permuting endomorphism $\phi$ of $G$ mapping $\bm{u}$ to $\bm{v}$. 

Fix a finite presentation $G=\langle s_1,\ldots ,s_n \ \vert \ \Sigma(s_1,\ldots,s_n)=1 \rangle$, where $\Sigma(x_1,\ldots,x_n)=1$ is a finite system of equations in the variables $x_1,\ldots,x_n$. For every $1\leq i\leq k$, the element $u_i$ can be written as a word $w_i(s_1,\ldots ,s_n)$.
We can write a $\exists\forall$-formula $\mu(\bm{u})$ (see below) that is satisfied by $G$, and whose meaning is "there exists a class-permuting endomorphism $\phi$ of $G$ that maps $\bm{u}$ to $\bm{u}$" (note that this statement is obviously true since we can take $\phi=\mathrm{id}_G$). In the following formula, ${\theta}(x_1,\ldots , x_{n})$ denotes the universal formula given by Lemma \ref{permuting}.
\[\mu(\bm{u}) : \exists x_1\ldots \exists x_n \ \Sigma(x_1,\ldots,x_n)=1\wedge u_i=w_i(x_1,\ldots ,x_n) \wedge \theta(x_1,\ldots ,x_n).\]Since $\bm{u}$ and $\bm{v}$ have the same $\exists\forall$-type (as they have the same $\forall\exists$-type), the formula $\mu(\bm{v})$ is satisfied by $G$ as well. Let $g_1,\ldots,g_n$ be the elements of $G$ given by the interpretation of $\mu(\bm{v})$ in $G$. We can define an endomorphism $\phi$ of $G$ mapping $s_i$ to $g_i$ for every $1\leq i\leq k$. This endomorphism maps $\bm{u}$ to $\bm{v}$ and it is class-permuting thanks to the previous lemma. By definition, there exists an integer $m\geq 1$ such that $\phi^m$ is equivalent to $\mathrm{id}_G$, and hence $\phi^m$ is an automorphism of $G$ by Proposition \ref{theorigid}. Thus $\phi$ is an automorphism of $G$.\end{proof} 

\subsection{Prime models}Recall that a group $G$ is \emph{prime} if it elementary embeds in every group $G'$ that is elementarily equivalent to $G$. In this subsection, we consider co-Hopfian virtually free groups with finite outer automorphism group. We shall see that these groups are prime and $\exists$-homogeneous. 

In \cite{Pet97}, Pettet gave a characterization of virtually free groups that have finite outer automorphism group. Note that this class is different from the class of co-Hopfian virtually free groups, as shown by the following examples.



\begin{ex}\label{cohopfoutinfini}Here is an example of a co-Hopfian virtually free group with infinitely many outer automorphisms. Let $A,B$ and $C$ be three groups isomorphic to the symmetric group $\mathcal{S}_3$. Let $a,b,c$ be elements of order 2 in $A,B,C$ respectively. Define $H=\langle a\rangle\times (B\ast_{b=c} C)$ and $G=A\ast_{\langle a\rangle} H$. In other words, $G$ is the fundamental group of the following graph of groups:\begin{center}
\begin{tikzpicture}[scale=1]
\node[draw,circle, inner sep=2.7pt, fill, label=below:{$\langle a\rangle\times B$}] (A1) at (2,0) {};
\node[draw,circle, inner sep=2.7pt, fill, label=below:{$\langle a\rangle\times C$}] (A2) at (4,0) {};
\node[draw,circle, inner sep=2.7pt, fill, label=below:{$A$}] (A3) at (0,0) {};
\node[draw=none, label=below:{$\langle a\rangle$}] (B1) at (1,1) {};
\node[draw=none, label=below:{$\langle a\rangle\times \langle b=c\rangle$}] (B2) at (3,1) {};
\draw[-,>=latex] (A3) to (A1) ;
\draw[-,>=latex] (A3) to (A2);
\end{tikzpicture}
\end{center}
We easily see that $N_A(\langle a\rangle)=\langle a \rangle$ and $N_{\langle a\rangle\times C}(\langle a\rangle\times \langle c\rangle)=\langle a\rangle\times \langle c\rangle$, and hence $G$ is co-Hopfian by Theorem \ref{Moioli2}. On the other hand, $\mathrm{Out}(G)$ is infinite. Indeed, if $h\in H$ is an element of infinite order, the Dehn twist $\phi_h$ (defined by $\phi_h(x)=x$ if $x\in A$ and $\phi_h(x)=hxh^{-1}$ if $x\in H$) has infinite order in $\mathrm{Out}(G)$.
\end{ex}

\begin{ex}\label{outfinipascohopf}Here is an example of a virtually free group with only finitely many outer automorphisms, and which is not co-Hopfian. Let $G=\mathbb{Z}/3\mathbb{Z}\ast \mathbb{Z}/3\mathbb{Z}\simeq \mathrm{PSL}_2(\mathbb{Z})$. As a free product, $G$ is not co-Hopfian. But $\mathrm{Out}(G)$ is finite by \cite{Pet97}. More generally, Guirardel and Levitt proved in \cite{GL15} (Theorem 7.14) that a hyperbolic group $G$ has an infinite outer automorphism group if and only if $G$ splits over a $\mathcal{Z}_{\mathrm{max}}$-subgroup (i.e.\ a virtually cyclic subgroup with infinite center which is maximal for inclusion among virtually cyclic subgroups with infinite center). Therefore, if $G=A\ast_C B$ with $A$ and $B$ finite then $\mathrm{Out}(G)$ is finite. For instance, $G=\mathbb{Z}/4\mathbb{Z}\ast_{\mathbb{Z}/2\mathbb{Z}} \mathbb{Z}/6\mathbb{Z}\simeq \mathrm{SL}_2(\mathbb{Z})$ is not co-Hopfian but $\mathrm{Out}(G)$ is finite.\end{ex} 

\begin{ex}$\mathrm{GL}_2(\mathbb{Z})$ is co-Hopfian and it has only finitely many outer automorphisms.\end{ex} 

The following definition was introduced by Ould Houcine in \cite[Definiton 1.4]{OH11}.

\begin{de}
A group $G$ is said to be \emph{strongly co-Hopfian} if there exists a finite set $F\subset G\setminus \lbrace 1\rbrace$ such that for every endomorphism $\phi$ of $G$, if $\ker(\phi)\cap F=\varnothing$ then $\phi$ is an automorphism.
\end{de}

In \cite[Lemma 3.5]{OH11}, Ould Houcine observed that being strongly co-Hopfian has interesting model-theoretic consequences. 

\begin{lemme}\label{prime}
Let $G$ be a finitely presented group. If $G$ is strongly co-Hopfian, then $G$ is prime and $\exists$-homogeneous.
\end{lemme}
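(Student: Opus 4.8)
The plan is to show the two model-theoretic consequences separately, both exploiting the defining finite set $F$ in the same way: a strongly co-Hopfian group has a distinguished first-order-expressible property, namely that every endomorphism avoiding $F$ on its kernel is an automorphism, and in particular an injection. First I would record the key encoding: since $G$ is finitely presented, fix a finite presentation $G=\langle s_1,\ldots,s_n \mid \Sigma(s_1,\ldots,s_n)=1\rangle$, and note there is a one-to-one correspondence between $n$-tuples $(g_1,\ldots,g_n)\in G'^n$ satisfying $\Sigma=1$ and homomorphisms $G\to G'$ (for any group $G'$). Writing the finite set $F=\{f_1,\ldots,f_m\}$ as words $f_i=v_i(s_1,\ldots,s_n)$, the condition ``$\phi$ is injective'' can be forced existentially: if a tuple satisfies $\Sigma=1$ together with $\bigwedge_i v_i(x_1,\ldots,x_n)\neq 1$, then the corresponding endomorphism (or homomorphism into an elementarily equivalent $G'$) misses $F$ on its kernel, hence is an automorphism/embedding.

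For primeness, let $G'\equiv G$ and I would build an elementary embedding $\varphi:G\to G'$. First, the existential sentence $\exists x_1\ldots\exists x_n\ (\Sigma(\bm x)=1 \wedge \bigwedge_{i=1}^m v_i(\bm x)\neq 1)$ holds in $G$ (take $x_j=s_j$, using that $\mathrm{id}_G$ avoids $F$), hence holds in $G'$ since $G\equiv G'$; this produces a homomorphism $\varphi:G\to G'$ with $\varphi(f_i)\neq 1$ for all $i$, so $\varphi$ is injective by the strongly co-Hopfian defining property transported to $G'$ (the crucial point being that the defining property of being strongly co-Hopfian with this particular $F$ is itself a first-order scheme, so it passes from $G$ to $G'$). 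Then I would upgrade injectivity to elementarity: given any formula $\theta(\bm y)$ and a tuple $\bm u=(u_1,\ldots,u_k)$ from $G$, I must show $G\models\theta(\bm u)\iff G'\models\theta(\varphi(\bm u))$. The standard device is to augment the sentence above: incorporate into a single first-order sentence, true in $G$ hence in $G'$, the statement that the injective endomorphism coming from a solution tuple is in fact an automorphism, and then use homogeneity-type back-and-forth to match types. Concretely, one shows $\bm u$ and $\varphi(\bm u)$ realize the same type by noting any tuple is expressible as words in the generators and the image of the embedding is all of $\varphi(G)$ with $\varphi$ an isomorphism onto its image that respects the presentation.

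For $\exists$-homogeneity, suppose $\bm u,\bm v\in G^k$ have the same $\exists$-type. I would produce an endomorphism carrying $\bm u$ to $\bm v$ and then invoke strong co-Hopfianity to conclude it is an automorphism. Writing $u_i=w_i(\bm s)$, consider the existential formula $\eta(\bm u):\exists x_1\ldots\exists x_n\ \big(\Sigma(\bm x)=1 \wedge \bigwedge_i u_i=w_i(\bm x) \wedge \bigwedge_j v_j(\bm x)\neq 1\big)$, which asserts the existence of an injective endomorphism fixing $\bm u$; this holds in $G$ via $\mathrm{id}_G$. Because $\bm u$ and $\bm v$ share the same $\exists$-type, $\eta(\bm v)$ also holds in $G$, yielding a homomorphism $\phi:G\to G$ with $\phi(\bm v)=\bm u$ (careful with the direction: one arranges the words so the recovered map sends the generators to a tuple realizing $\bm v$ while the ``$=w_i$'' clause ties it to $\bm u$) that avoids $F$ on its kernel, hence is an automorphism of $G$. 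The resulting automorphism maps $\bm u$ to $\bm v$, giving $\exists$-homogeneity.

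The main obstacle I anticipate is the elementarity upgrade in the primeness argument: producing an injective $\varphi:G\to G'$ is routine from the existential sentence, but showing it is \emph{elementary} requires more than injectivity. The clean route is to observe that strong co-Hopfianity makes $G$ ``definable up to isomorphism'' inside any elementarily equivalent model — the embedding's image is rigidly pinned down by the first-order data of the presentation together with the $F$-avoidance forcing surjectivity onto an isomorphic copy — so that $\varphi$ is in fact onto an elementary substructure. I would expect this to be exactly where Ould Houcine's original argument does the real work, and I would follow that template rather than attempting an independent back-and-forth.
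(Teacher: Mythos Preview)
The paper does not prove this lemma; it simply cites Ould Houcine \cite[Lemma 3.5]{OH11}. So there is no ``paper's own proof'' to compare against. I will assess your proposal on its merits.

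Your argument for $\exists$-homogeneity is correct (modulo the confusion about direction, which you flag and which resolves the right way): the existential formula $\eta(\bm y)$ asserting ``there is an endomorphism sending $\bm u$ to $\bm y$ whose kernel misses $F$'' holds at $\bm u$ via the identity, hence at $\bm v$ by equality of $\exists$-types, and the resulting endomorphism is an automorphism by strong co-Hopfianity.

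Your argument for primeness has a genuine gap, and your proposed repair does not work. Strong co-Hopfianity is a statement about \emph{endomorphisms of $G$} being \emph{automorphisms}; it is not a first-order scheme, and there is no way to ``transport'' it to $G'$. Even if $G'$ were strongly co-Hopfian, that would tell you nothing about a homomorphism $\varphi:G\to G'$. So you cannot deduce injectivity of $\varphi$ this way, and in any case injectivity is not what you need --- you need elementarity.

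The correct argument bypasses injectivity entirely and proves elementarity formula by formula, using strong co-Hopfianity only in $G$. Having chosen $\bm g'\in(G')^n$ witnessing the existential sentence and defined $\varphi(s_i)=g'_i$, take any formula $\theta(\bm y)$ and any $\bm u\in G^k$ with $u_i=w_i(\bm s)$. If $G\models\theta(\bm u)$, consider the sentence
\[
\forall x_1\ldots\forall x_n\ \Big(\Sigma(\bm x)=1\ \wedge\ \bigwedge_j v_j(\bm x)\neq 1\ \Rightarrow\ \theta\big(w_1(\bm x),\ldots,w_k(\bm x)\big)\Big).
\]
This sentence holds in $G$: any $\bm g\in G^n$ satisfying the antecedent determines an endomorphism with kernel disjoint from $F$, hence an automorphism $\phi$, and then $G\models\theta(\phi(\bm u))$ since automorphisms preserve truth. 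By $G\equiv G'$ the sentence holds in $G'$; applying it to $\bm g'$ gives $G'\models\theta(\varphi(\bm u))$. Running the same argument with $\neg\theta$ gives the converse, so $\varphi$ is elementary. (Equivalently: the formula $\Sigma(\bm x)=1\wedge\bigwedge_j v_j(\bm x)\neq 1$ isolates the type of the generating tuple, so $G$ is atomic, hence prime.) This is presumably what Ould Houcine does; your instinct to follow that template is right, but the mechanism is the sentence transfer above, not any transport of co-Hopfianity.
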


Examples of strongly co-Hopfian groups include torsion-free hyperbolic groups that do not split non-trivially over $\mathcal{Z}$ or as a free product (see \cite{Sel09}), $\mathrm{Out}(F_n)$, $\mathrm{Aut}(F_n)$ and the mapping-class group $\mathrm{MCG}(\Sigma_g)$ of a connected closed orientable surface of genus $g$ sufficiently large (as observed in \cite{And20}). Therefore, all these groups are prime and $\exists$-homogeneous. 

\begin{prop}Let $G$ be a co-Hopfian virtually free group with finite outer automorphism group. Then $G$ is strongly co-Hopfian. As a consequence, $G$ is prime and $\exists$-homogeneous.
\end{prop}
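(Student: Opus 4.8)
The plan is to exhibit an explicit finite set $F\subset G\setminus\{1\}$ witnessing strong co-Hopfianness, by combining the rigidity of $G$ (Proposition \ref{theorigid}) with the cocompactness/finiteness statement of Proposition \ref{sa2} and the finiteness of $\mathrm{Out}(G)$. The guiding idea is that an endomorphism $\phi$ of $G$ fails to be injective only if, after precomposing with some automorphism in $\mathrm{Aut}_0(G)$, it kills an element of a fixed finite set; and an injective endomorphism is automatically an automorphism since $G$ is co-Hopfian. So it suffices to ensure that the test set $F$ is stable enough that ``$\ker(\phi)\cap F=\varnothing$'' already rules out both non-injectivity and non-surjectivity-after-twisting.

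Concretely, I would first apply Proposition \ref{sa2} to obtain a finite set $F_0\subset G\setminus\{1\}$ such that every non-injective $\phi\colon G\to G$ satisfies $\ker(\phi\circ\sigma)\cap F_0\neq\varnothing$ for some $\sigma\in\mathrm{Aut}_0(G)$. The issue is that this $\sigma$ appears inside the kernel condition, so I want to absorb it into $F$. Since $\mathrm{Out}(G)$ is finite and $\mathrm{Aut}_0(G)$ has finite index in $\mathrm{Aut}(G)$ (Lemma \ref{finite_index}), I would pick coset representatives and enlarge $F_0$ to its union of images under a suitable finite set of automorphisms so that the twisting automorphism can be removed from the condition. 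The key point enabling this: if $\sigma\in\mathrm{Aut}_0(G)$ and $\phi\circ\sigma$ kills some $g\in F_0$, then $\phi$ kills $\sigma(g)$, so I should take $F\supseteq\bigcup_\sigma \sigma(F_0)$ over enough automorphisms. The finiteness of $\mathrm{Out}(G)$ is what keeps this union finite: although $\mathrm{Aut}_0(G)$ need not be finite, I claim that for the purpose of hitting $F_0$ one only needs finitely many candidate automorphisms, because the relevant data (which conjugacy class of finite subgroup gets killed, and the combinatorics of the Stallings tree) factors through the finite set $\mathrm{Out}(G)$.

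The main obstacle I anticipate is precisely this last reduction: $\mathrm{Aut}_0(G)$ is infinite in general, so one cannot naively take $\bigcup_{\sigma\in\mathrm{Aut}_0(G)}\sigma(F_0)$. I would resolve this by using that inner automorphisms act trivially up to conjugacy on kernels (the kernel of $\phi\circ\mathrm{ad}(g)$ is $g^{-1}\ker(\phi)g$, and membership in a conjugacy-closed finite set is unaffected after closing $F_0$ under conjugacy by the finitely many vertex-group elements needed), so only the image in $\mathrm{Out}(G)$ matters. Since $\mathrm{Out}(G)$ is finite, choosing one lift $\sigma_1,\ldots,\sigma_m$ per outer class and setting $F=\bigcup_{i=1}^m \sigma_i(\widetilde{F_0})$, where $\widetilde{F_0}$ is $F_0$ closed under the finitely many relevant conjugations, yields a finite set. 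Then for any non-injective $\phi$, Proposition \ref{sa2} gives $\sigma\in\mathrm{Aut}_0(G)$ with $\phi\circ\sigma$ killing some $g\in F_0$; writing $\sigma=\mathrm{ad}(h)\circ\sigma_i$ up to the outer class, one checks $\phi$ kills an element of $F$, so $\ker(\phi)\cap F\neq\varnothing$. Contrapositively, $\ker(\phi)\cap F=\varnothing$ forces $\phi$ injective, hence an automorphism by co-Hopfianness. This establishes strong co-Hopfianness; the consequence that $G$ is prime and $\exists$-homogeneous is then immediate from Lemma \ref{prime}, since finitely generated virtually free groups are finitely presented.
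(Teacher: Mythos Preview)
Your approach is correct and essentially the same as the paper's: obtain a finite test set from Proposition~\ref{sa} (or \ref{sa2}) and saturate it over representatives of $\mathrm{Out}(G)$, then use co-Hopfianness. You overcomplicate the inner-automorphism step, though: since $\ker(\phi)$ is normal one has $\ker(\phi\circ\mathrm{ad}(h))=\ker(\phi)$ on the nose, so no conjugacy closure of $F_0$ is needed, and neither rigidity nor the passage through $\mathrm{Aut}_0(G)$ plays any role; the paper simply applies Proposition~\ref{sa} and sets $F'=\bigcup_i\sigma_i(F)$ over coset representatives $\sigma_i$ of $\mathrm{Inn}(G)$ in $\mathrm{Aut}(G)$.
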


\begin{proof}
Let $F$ be the finite subset of $G\setminus \lbrace 1\rbrace$ given by Proposition \ref{sa}. By assumption, the group $\mathrm{Inn}(G)$ of inner automorphisms of $G$ has finite index in $\mathrm{Aut}(G)$. Write $\mathrm{Aut}(G)=\bigcup_{1\leq i\leq \ell}\sigma_i\circ \mathrm{Int}(G)$ and set $F'=\bigcup_{1\leq i\leq \ell}\sigma_i(F)$. By Proposition \ref{sa}, every endomorphism $\phi$ of $G$ such that $\ker(\phi)\cap F'= \varnothing$ is injective, and hence $\phi$ is an automorphism since $G$ is co-Hopfian.\end{proof}

\renewcommand{\refname}{Bibliography}
\bibliographystyle{alpha}
\def\cprime{$'$} \def\cprime{$'$}

\vspace{8mm}

\textbf{Simon André}

Institut für Mathematische Logik und Grundlagenforschung

Westfalische Wilhelms-Universität Münster

Einsteinstraße 62

48149 Münster, Germany.

E-mail address: \href{mailto:sandre@wwu.de}{sandre@uni-muenster.de}

\end{document}